\numberwithin{equation}{section}
\newcommand{\R}{\mathbb{R}}
\newcommand{\N}{\mathbb{N}}
\newcommand{\E}{\mathbb{E}}
\newcommand{\dd}{\text{d}}
\newtheorem{thm}{Theorem}[section]
\newtheorem{defn}[thm]{Definition}
\newtheorem{lem}[thm]{Lemma}
\newtheorem{prop}[thm]{Proposition}
\newtheorem{cor}[thm]{Corollary}
\newtheorem{rem}[thm]{Remark}
\newtheorem{assumption}[thm]{Assumption}
\begin{document}

\title{Order-One Convergence of the Backward Euler Method for  Random Periodic Solutions of Semilinear SDEs
\footnotemark[2] \footnotetext[2]{This work was supported by Natural Science Foundation of China (12071488, 12371417, 11971488) and
                Natural Science Foundation of Hunan Province (2020JJ2040). 
                \\
                E-mail addresses:
 y.j.guo@csu.edu.cn, x.j.wang7@csu.edu.cn, 
yue.wu@strath.ac.uk. 
                }}
\author{Yujia Guo$^{a}$, Xiaojie Wang$^{*a}$, and Yue Wu$^{b}$
	\\
\footnotesize $^a$ School of Mathematics and Statistics, HNP-LAMA, Central South University, Changsha, \\
\footnotesize Hunan, P. R. China \\
\footnotesize $^b$ Department of Mathematics and Statistics, University of Strathclyde, Glasgow G1 1XH, UK} 
\date{\today}

\maketitle

\begin{abstract}
In this paper, we revisit the backward Euler method for 
numerical approximations of random periodic 
solutions of semilinear SDEs with additive noise. 
Improved $L^p$-estimates of the random periodic solutions of the considered SDEs are obtained under a more relaxed condition compared to literature. The backward Euler scheme is proved to converge with an order one in the mean square sense, which also improves the existing order-half convergence. 
Numerical examples are presented to verify our theoretical analysis.
\par 
{\bf AMS subject classification:} {\rm\small 37H99, 60H10, 60H35, 65C30.}\\	

{\bf Keywords:} Backward Euler method, Random periodic solution, Stochastic differential equations, Additive noise, Pull-back.
\end{abstract}

\section{Introduction}
Many phenomena in the real world {\color{black}exhibit} both periodic and random nature, {\color{black} for instance,} the daily temperature, energy consumption, and airline passenger volumes. 
{\color{black} To understand the long-time behaviour of these laws of random motion, where the underlying dynamics are usually modeled via stochastic differential equations (SDEs), it is crucial to study their random periodic solutions\cite{zhao2009random, feng2011pathwise}.}

{\color{black}Though the periodic solution has been a central concept
in the field of deterministic dynamical systems since Poincar\'e’s seminal work \cite{poincare1881memoire}, its stochastic counterpart has not been properly defined or studied until the last decade.}
Zhao and Zheng \cite{zhao2009random} {\color{black}first formulated} the definition of the pathwise random periodic solutions for $C^{1}$-cocycles {\color{black}of random dynamical systems and Feng, Zhao and Zhou \cite{feng2011pathwise} further developed the definition for semiflows.}
This pioneering study boosts a series of work, including the study of anticipating random solutions of SDEs with multiplicative linear noise \cite{feng2016anticipating},
the existence of random solutions generated by non-autonomous SPDEs with additive noise \cite{feng2012random}, periodic measures and ergodicity \cite{feng2020random}, etc. 
Note that almost all the sequel works are based on the definition of pathwise random periodic solutions for semiflows of random dynamical systems.

As the random periodic solution is not constructed explicitly, it is useful to study its numerical approximation. {\color{black}This is  an infinite time horizon problem. The relevant research on the numerics of random periodic solutions for SDEs has made rapid progress recently} \cite{mil1975approximate,burrage2004numerical,wang2020mean,wang2023mean,platen1999introduction,oksendal2003stochastic,ruemelin1982numerical}.
{\color{black} The first paper \cite{feng2017numerical} to approximate the random period trajectory considered an Euler-Maruyama method and a modified Milstein method for a dissipative system with a global Lipschitz condition.}
Wei and Chen \cite{wei2020numerical} later generalised the Euler-Maruyama method to the stochastic theta method {\color{black}by showing that the approximation converges to the exact one at an order $1/4$.}
Wu \cite{wu2022backward} studied the existence and uniqueness of the random periodic solution
for an additive SDE with a one-sided Lipschitz condition {\color{black}and gave the analysis of the order-half} convergence of its numerical approximation via the backward Euler method.

In this paper, we mainly consider the strong convergence rate of the backward Euler method for the random periodic solution of semilinear SDE with a one-sided Lipschitz condition. {\color{black}Our contribution is two-folded:
\begin{itemize}
    \item 
Improved $L^p$-estimates of the random periodic solutions of the considered SDEs 
    can be guaranteed under a more relaxed condition compared to \cite{wu2022backward};
    \item The order of convergence of the backward Euler method used to approximate random periodic solutions can be lifted from half in \cite{wu2022backward} to one for SDEs with an additive noise.
\end{itemize}}
The outline of the paper is as follows. 
In section \ref{sec:Random Periodic Solutions of SDEs}, we present some standard notation and assumptions that will be employed in our proofs,
and give the existence and uniqueness of the random periodic solution.
In Section \ref{sec:L^p-estimates}we consider the $L^{p}$-estimates of the random periodic solutions {\color{black}under a relaxed condition compared to \cite{wu2022backward}}. 
Section \ref{sec:strong-rate-BEM} is {\color{black} devoted to the error analysis of order-one convergence of the backward Euler method, where the supporting evidence is shown via}
numerical experiments in Section \ref{sec:numerical results}. 

\section{Random Periodic Solutions of SDEs}
\label{sec:Random Periodic Solutions of SDEs}
Let us recall the definition of the random periodic solution for stochastic semi-flows  given in \cite{feng2011pathwise}.
Let $X$ be a separable Banach space.
Denote by
$ {\color{black}(}\Omega,\mathcal{F},\mathbb{P},(\theta _{s})_{s\in \mathbb{R}}{\color{black})}$ 
a metric dynamical system and 
$\theta _{s}:\Omega \rightarrow \Omega$ 
is assumed to be a measurably invertible 
for all $s \in \mathbb{R}$.
Denote $\Delta:= \{(t,s)\in \mathbb{R}^{2},s\leq t \}$.  
Consider a stochastic  periodic semi-flow 
$u:\Delta \times \Omega \times X \rightarrow X $ of period $\tau$, 
which satisfies the following standard condition
\begin{equation}
    u(t,r,\omega) =
    u(t,s,\omega) 
    \circ
    u(s,r,\omega) \quad {\color{black}a.e.\ \omega \in \Omega,}
\end{equation}
for all $r \leq s \leq t, r, s, t\in \mathbb{R}$.

\begin{defn}\label{def:rps}
	{\color{black}Given $\tau>0$.} A random periodic solution of period $\tau$ of a semi-flow
	 $u:\Delta \times \Omega \times X \rightarrow X 
       $
	  is an $\mathcal{F}$-measurable map 
	  $Y: \mathbb{R} \times \Omega \rightarrow X $ 
	  such that
	\begin{equation}
		u(t-s,s,Y(s,\omega),\omega)=Y(t,\omega)\text{ and }
		Y(t+\tau,\omega)=
		Y(t,\theta_{\tau}\omega)
	\end{equation}
	for any $(t,s) \in \Delta$ and a.e. $\omega \in \Omega$.
\end{defn}
Throughout this paper the following notation
is frequently used. First, we have $ {\color{black}d} \in \mathbb{N} $.
Denote $[d]:=\{1,...,d\}$ and {\color{black}let the letter
$ C $ to denote a generic positive
constant independent of the time and temporal stepsize of the numerical scheme we will introduce later. }
Let {\color{black}$|\cdot|$,} $ \| \cdot \| $ and  
$ \left\langle \cdot , \cdot \right\rangle $
be {\color{black}the absolute value of a scalar,} the Euclidean norm and the inner product of vectors
in $ \R^d $, respectively.
By $A^{T}$ we denote the transpose of vector or matrix.
Given a matrix $A$, we use $\| A \|:=\sqrt{trace(A^{T}A)}$ to denote the trace norm of $A$.
On a canonical probability space 
$ ( \Omega, \mathcal{ F }, \mathbb{P} ) $, 
we use $ \E $ to {\color{black}denote the} expectation and
$ L^p(\Omega; \textcolor{black}{\R^{d }}) $
to denote the family of 
$ \textcolor{black}{\R^{d}} $-valued variables with
the norm defined by 
$ \| \xi \|_{L^p(\Omega;\textcolor{black}{\R^{d}})} 
=(\E [\|\xi \|^p])^{\frac{1}{p}} < \infty $.  {\color{black}
For a integrable random variable $X$ on $(\Omega,\mathcal{F},\mathbb{P})$ and for $\mathcal{G}$ a $\sigma$-algebra such that $\mathcal{G} \subset \mathcal{F}$, we use $\E[X|\mathcal{G}]$ to denote the conditional expectation.}
Let $W:\mathbb{R} \times \Omega \rightarrow
\mathbb{R}^{d} $ be a standard two-sided Wiener process on $(\Omega,\mathcal{F},\mathbb{P})$. 
The filtration is defined as
$\mathcal{F}^{t}_{s}:=
\sigma\{W_{u}-W_{v}:s\leq v\leq u\leq t\}$ and $\mathcal{F}^{t}
=\mathcal{F}^{t}_{-\infty}
=\bigvee_{s\leq t}\mathcal{F}^{t}_{s}$.
{\color{black}Denote the standard $\mathbb{P}$-preserving ergodic Wiener shift by 
$\theta:\mathbb{R} \times \Omega \rightarrow \Omega$,
which has the property $\theta_{t}(\omega)(s):=W_{t+s}-W_{t}$ for any $s,t \in \mathbb{R}$.}

{\color{black} Given a standard two-sided Wiener process $W$ on a metric dynamical system $ (\Omega,\mathcal{F},\mathbb{P},(\theta _{s})_{s\in \mathbb{R}})$, where $\theta$ is a  $\mathbb{P}$-preserving ergodic Wiener shift, }
we {\color{black}revisit the long time behaviour of the following stochastic differential equation with an additive noise}: 
\begin{equation}
	\label{eq:Problem_SDE}
	\left\{
	\begin{aligned}
		\dd X_{t}^{t_{0}} 
         & = {\color{black}\big(}-\Lambda X_{t}^{t_{0}}
         +f (t, \,X_{t}^{t_{0}}){\color{black}\big)}\dd t
         + g ( t ) \, \dd W(t),
   \quad t \in (t_{0},T],\\
		 X_{t_{0}}^{t_{0}} & = \xi,
	\end{aligned}\right.
\end{equation}
where  
{\color{black} $f:\mathbb{R} \times \mathbb{R}^{d}\rightarrow \mathbb{R}^{d}$,  $g:\mathbb{R} \rightarrow  \mathbb{R}$ and $\Lambda$ is $d \times d$ matrix, and reconsider its numerical treatment. Note that we use $X_{t_1}^{t_0}$ to emphasise a process $X$ evaluated at $t_1$ which starts from $t_0$.}
The random initial value $\xi$ is assumed to be $\mathcal{F}^{t_{0}}$-measurable. {\color{black} Further conditions on the initial value, the drift and diffusion coefficients are collected below.}
\begin{assumption}\label{ass}
Suppose the following conditions are satisfied.

(\romannumeral1)
$\Lambda$ is self-adjoint and positive definite operator and
there exists a non-decreasing sequence $(\lambda_{i})_{i\in [d] } \subset \mathbb{R}$ of positive real numbers and an orthonormal basis $(e_{i})_{i \in [d]}$, such that
\begin{equation}
\label{eq:Lambda}
\Lambda e_{i} =\lambda_{i} e_{i}, 
\end{equation}
for every $i \in [d]$.

(\romannumeral2)
The drift coefficient functions $f$ is continuous and $f(t,x)=f(t+\tau,x)$. 
There exists a constant $0< c_{f} < \lambda_{1}$ such that for any $x,y \in \mathbb{R}^ {d}$ and $t \in [0,\tau)$
\begin{equation}
\label{eq:c_f}
\begin{split}
    \langle 
    x - y , f(t,x) - f(t,y) 
    \rangle
     &\leq 
     c_{f} \| x - y \|^{2},\\    
      \langle 
       x , f(t,x) 
        \rangle
     &\leq 
     c_{f}(1+\|x\|^{2}).
\end{split}
\end{equation} 

(\romannumeral3)
The diffusion coefficient functions $g$ is continuous and $g(t)=g(t+\tau)$ and there exists a constant $c_{g}>0$ such that $\sup_{ t \in [0, \tau) } \|g(t) \|
       \leq c_{g}$ and
\begin{equation}
\begin{split}
    \|g(t_{1})-g(t_{2})\|
      \leq c_{g} |t_{2}-t_{1} |,
      \quad
      \forall t_{1},t_{2} \in [0, \tau).
\end{split}
\end{equation}

(\romannumeral4)
There exists a constant $C^{*}>0$ 
 such that 
$\E \big[\|\xi\|^{2} \big]\leq C^{*}$.
\end{assumption}

\begin{assumption}
\label{ass:f(t,x)_c_{f}}
There exists a constant $\tilde{c_{f}} $ such that 
\begin{equation}
    \begin{split}
        \Bigg \|
        f(t,x)-
        \frac{\langle f(t,x),x\rangle}{ \|x \|^{2}} x 
        \Bigg \| 
        \leq 
        \tilde{c_{f}}(1+ \|x \|^{2}), 
    \end{split}
\end{equation}
for $ x \in \mathbb{R}^{d},t \in[0,\tau) $.
\end{assumption}

 {\color{black}Following a similar argument as in \cite[Proposition 7.1]{2017Strong}, the SDE \eqref{eq:Problem_SDE} admits a unique global semiflow under Assumption \ref{ass} and Assumption \ref{ass:f(t,x)_c_{f}}. 
 Note that by the variation of constant formula, the solution of \eqref{eq:Problem_SDE} is written as
\begin{equation}
    \label{eq:variation of comstant formula}
    X^{t_{0}}_{t}(\xi)=
    e^{-\Lambda (t-t_{0})}\xi
    +\int_{t_{0}}^{t}
    e^{-\Lambda (t-s)} f(s,X^{t_{0}}_{s})\,\dd s
    +\int_{t_{0}}^{t}
    e^{-\Lambda(t-s)} g(s)\,\dd W_{s}.
\end{equation}
These assumptions can ensure the existence and uniqueness of the random periodic solution, which has been proposed in \cite[Theorem 7]{wu2022backward}.

\begin{thm}\label{thm:unique_random_periodic_solution}
 Let Assumptions \ref{ass} 
 and Assumptions \ref{ass:f(t,x)_c_{f}} be hold, then there exists a unique random periodic solution $X_{t}^{*}(\cdot)\in L^{2}(\Omega)$ {\color{black}with the form \begin{equation}
    \label{eq:pull-back}
    X^{*}_{t}=
    \int_{-\infty}^{t}
    e^{-\Lambda(t-s)}f(s,X^{*}_{s})\,\dd s
    +\int_{-\infty}^{t}
    e^{-\Lambda(t-s)}g(s)\,\dd W_{s}
\end{equation}} such that {\color{black} $X^{*}$ is a limit of the pull-back 
$X^{-k\tau}_{t}(\xi)$ of \eqref{eq:Problem_SDE} when $k \rightarrow \infty$, ie,
} 
	\begin{equation}
		\lim_{k \rightarrow \infty }
        \E\big[
        \| X_{t}^{-k\tau}(\xi) -X_{t}^{*} \|^{2}
        \big]
        =0.
	\end{equation}
\end{thm}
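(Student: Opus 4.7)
The plan is to establish the theorem by the pull-back method, following the structure suggested by the formula \eqref{eq:pull-back} itself: fix $t\in\mathbb{R}$ and show that the sequence $\{X_t^{-k\tau}(\xi)\}_{k\in\mathbb{N}}$ is Cauchy in $L^2(\Omega;\mathbb{R}^d)$, identify the limit with the right-hand side of \eqref{eq:pull-back}, and finally verify both the random periodicity property and the uniqueness. The central tool is the strict dissipativity built into Assumption \ref{ass}: since $\Lambda$ is self-adjoint with smallest eigenvalue $\lambda_1$ and $c_f<\lambda_1$, for any $x,y\in\mathbb{R}^d$ and $t\in\mathbb{R}$,
\begin{equation*}
\langle x-y,\,-\Lambda(x-y)+f(t,x)-f(t,y)\rangle\le -(\lambda_1-c_f)\|x-y\|^2,
\end{equation*}
with strictly negative constant $\lambda_1-c_f>0$ on the right-hand side.

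For the Cauchy step, fix $m>k$ and observe that both $X_t^{-k\tau}(\xi)$ and $X_t^{-m\tau}(\xi)$ are driven by the same Wiener process on $[-k\tau,t]$. Thus the difference $Z_t:=X_t^{-k\tau}(\xi)-X_t^{-m\tau}(\xi)$ satisfies a random ODE with no diffusion term on $[-k\tau,t]$, starting from $Z_{-k\tau}=\xi-X_{-k\tau}^{-m\tau}(\xi)$. Applying the chain rule to $\|Z_t\|^2$ and using the dissipativity inequality above yields
\begin{equation*}
\mathbb{E}\big[\|Z_t\|^2\big]\le e^{-2(\lambda_1-c_f)(t+k\tau)}\,\mathbb{E}\big[\|\xi-X_{-k\tau}^{-m\tau}(\xi)\|^2\big].
\end{equation*}
A uniform-in-$m$ bound on $\mathbb{E}\|X_{-k\tau}^{-m\tau}(\xi)\|^2$, obtained from the second inequality in \eqref{eq:c_f} together with Assumption \ref{ass}(iv) by a standard Gronwall/It\^o argument, shows that the right-hand side tends to $0$ as $k\to\infty$. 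Hence $\{X_t^{-k\tau}(\xi)\}_k$ is Cauchy, and its $L^2$-limit $X_t^*$ is independent of the particular initial datum $\xi$ by the same contraction estimate applied to two different initial values.

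To identify $X_t^*$ with the expression in \eqref{eq:pull-back}, I would pass to the limit $k\to\infty$ in the variation-of-constants formula \eqref{eq:variation of comstant formula} written on $[-k\tau,t]$; the deterministic heat-kernel factor $e^{-\Lambda(t-s)}$ provides the exponential decay needed to make the improper integrals converge in $L^2$. The random periodic property $X_{t+\tau}^*(\omega)=X_t^*(\theta_\tau\omega)$ then follows by a change of variables $s\mapsto s-\tau$ in the representation \eqref{eq:pull-back}, using the $\tau$-periodicity of $f$ and $g$ in Assumption \ref{ass}(ii)--(iii) together with the defining property $W_{t+\tau}(\omega)-W_{s+\tau}(\omega)=W_t(\theta_\tau\omega)-W_s(\theta_\tau\omega)$ of the Wiener shift. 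For uniqueness, given any other random periodic solution $Y$ in $L^2(\Omega)$, apply the same dissipativity estimate to $X_t^*-Y_t$ on the interval $[-k\tau,t]$: the exponential factor $e^{-2(\lambda_1-c_f)(t+k\tau)}$ drives the difference to zero while periodicity keeps $\mathbb{E}\|X_{-k\tau}^*-Y_{-k\tau}\|^2$ bounded, forcing $X_t^*=Y_t$ almost surely.

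The main obstacle I anticipate is the bookkeeping for the initial data at time $-k\tau$. The contraction estimate only buys us something if $\mathbb{E}\|X_{-k\tau}^{-m\tau}(\xi)\|^2$ can be controlled uniformly in $m$, which requires an a priori moment bound that holds on an arbitrarily long backward interval; this is precisely where the second estimate in \eqref{eq:c_f} (the coercivity of $f$) together with the positive definiteness of $\Lambda$ is needed, and Assumption \ref{ass:f(t,x)_c_{f}} ensures the global semiflow is well-defined in the first place. Once this uniform moment bound is in hand, every other step is a clean application of the dissipativity inequality, so the heart of the proof really is the interplay $c_f<\lambda_1$ which converts the SDE into a strict contraction on $L^2$.
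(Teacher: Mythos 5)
The paper does not include a self-contained proof of Theorem \ref{thm:unique_random_periodic_solution}: it defers entirely to \cite[Theorem 7]{wu2022backward}, and the argument you outline --- pull-back contraction via the dissipativity $\langle x-y,\,-\Lambda(x-y)+f(t,x)-f(t,y)\rangle\le-(\lambda_{1}-c_{f})\|x-y\|^{2}$ (exploiting that the additive noise cancels in the difference), uniform second-moment bounds to control the data at time $-k\tau$, identification of the limit with \eqref{eq:pull-back}, periodicity via the Wiener shift, and uniqueness by the same contraction --- is essentially the proof given in that reference. Your proposal is correct and takes the same approach; the only step you should tighten in a full write-up is the identification with \eqref{eq:pull-back}, since $f$ is only one-sided Lipschitz and so $L^{2}$-convergence of $X_{s}^{-k\tau}$ does not by itself give convergence of $f(s,X_{s}^{-k\tau})$; one needs the uniform moment bounds together with continuity of $f$ (a Vitali-type uniform-integrability argument) to pass to the limit inside the drift integral.
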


\section{Improved $L^p$-estimates of the random periodic solutions}
\label{sec:L^p-estimates}
{\color{black}In this section, our objective is to demonstrate the time-uniform moment boundedness of the random periodic solutions. 
While the work of Wu (2022) established the uniform boundedness for the $p$-th moment of the SDE solution as referenced in \cite{wu2022backward}, our contribution lies in surpassing the limitation imposed on the drift functions (see Remark \ref{rem:moment-bounds} for details).
To accomplish this objective, we begin by deriving a generalized lemma, building upon the principles outlined in \cite[Lemma 8.1]{it1964stationary}.}

\begin{lem} 
\label{lem:ito lemma}
Let $m:[a,\infty) \rightarrow [0,\infty)$, 
$\psi:[a,\infty) \rightarrow [0,\infty)$ be nonnegative continuous functions for $a \in \mathbb{R}$.
If {\color{black} there exists a positive constant $\delta$ such that}
\begin{equation}
\label{eq:m(t)-m(s)}
     m(t)-m(s)
     \leq 
     - \delta\int_{s}^{t} m(u) \, \dd u  
     +\int_{s}^{t} \psi(u) \, \dd u   
     \qquad     
\end{equation}
{\color{black}for any $a \leq s < t < \infty$, } then
\begin{equation}
    \label{eq:m(a)}
        m(t) 
        \leq
        m(a) 
         + \int_{a}^{t} 
           e^ {-\delta(t-u) }\psi(u) \, \dd u.
\end{equation}
\end{lem}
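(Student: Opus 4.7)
My plan is to compare $m$ against the explicit solution of the corresponding linear ODE. I would first set
\begin{equation*}
n(t):=e^{-\delta(t-a)}m(a)+\int_{a}^{t}e^{-\delta(t-u)}\psi(u)\,\dd u,
\end{equation*}
which is the variation-of-constants solution of $n'(t)=-\delta n(t)+\psi(t)$ with $n(a)=m(a)$. Integrating this ODE produces the identity $n(t)-n(s)=-\delta\int_{s}^{t}n(u)\,\dd u+\int_{s}^{t}\psi(u)\,\dd u$ for all $a\leq s\leq t$.

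Subtracting this identity from the hypothesis \eqref{eq:m(t)-m(s)} and writing $\phi:=m-n$ then yields the self-referential integral inequality
\begin{equation*}
\phi(t)-\phi(s)\leq -\delta\int_{s}^{t}\phi(u)\,\dd u,\qquad a\leq s\leq t,
\end{equation*}
together with the boundary value $\phi(a)=0$ and the continuity of $\phi$. The bulk of the proof then reduces to showing $\phi(t)\leq 0$ for every $t\geq a$, since this gives $m(t)\leq n(t)\leq m(a)+\int_{a}^{t}e^{-\delta(t-u)}\psi(u)\,\dd u$ (using $e^{-\delta(t-a)}\leq 1$ and $m(a)\geq 0$), which is exactly \eqref{eq:m(a)}.

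To establish $\phi\leq 0$ I would argue by contradiction. Suppose $\phi(u_{0})>0$ for some $u_{0}>a$, and introduce $\tilde{t}:=\sup\{s\in[a,u_{0}):\phi(s)\leq 0\}$, which is well-defined because $\phi(a)=0$. Continuity of $\phi$ then forces $\tilde{t}\in[a,u_{0})$, $\phi(\tilde{t})=0$, and $\phi>0$ throughout $(\tilde{t},u_{0}]$. Applying the self-referential inequality on $[\tilde{t},u_{0}]$ gives $\phi(u_{0})\leq -\delta\int_{\tilde{t}}^{u_{0}}\phi(u)\,\dd u<0$, contradicting $\phi(u_{0})>0$.

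The main obstacle is precisely this last step: because $m$ is only assumed continuous, one cannot differentiate \eqref{eq:m(t)-m(s)} and invoke Gronwall's lemma in its differential form. The excursion-based contradiction argument above is what lets the averaged (integral) inequality be promoted to a pointwise sign statement without any extra regularity on $m$ or $\psi$; everything else is routine manipulation of the variation-of-constants formula.
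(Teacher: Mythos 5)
Your proof is correct and takes essentially the same route as the paper's: both compare $m$ against an explicit variation-of-constants type function, reduce the claim to showing the difference $\phi$ satisfies $\phi(t)-\phi(s)\leq -\delta\int_{s}^{t}\phi(u)\,du$ with $\phi(a)=0$, and conclude $\phi\leq 0$ by a continuity-based contradiction on an interval where $\phi$ would be positive yet increasing. The only cosmetic differences are that your comparison function keeps the factor $e^{-\delta(t-a)}$ on $m(a)$ (giving a marginally sharper intermediate bound, relaxed at the end via $m(a)\geq 0$), whereas the paper drops it from the outset and uses $m(a)\geq 0$ inside the integral inequality, and that your supremum construction of $\tilde{t}$ makes rigorous the interval-selection step that the paper states only tersely.
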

\begin{proof}[Proof of Lemma \ref{lem:ito lemma}]

Denote 
$m_{1}(t)
:=m(a) 
+\int_{a}^{t} e^{-\delta(t-u) }\psi(u)\, \dd u$. {\color{black} It is easy to derive that}
\begin{equation}
    \begin{split}
        m_{1}'(t)
        & =
        -\delta \int_{a}^{t} 
        e^{-\delta(t-u) }\psi(u)\, \dd u
        +\psi (t)\\
        & =
        -\delta(m_1(t)-m(a)) + \psi(t).
    \end{split}
\end{equation}
So we get
\begin{equation}\label{eqn:differencem1}
    m_{1}(t)-m_{1}(s)
    =
     -\delta \int_{s}^{t} (m_{1}(u)-m(a))\, \dd u 
     +\int_{s}^{t} \psi(u)\, \dd u.
\end{equation}
{\color{black} Now set } $m_{2}(t)=m(t)-m_{1}(t)$. 
{\color{black}
Due to  \eqref{eq:m(t)-m(s)}, \eqref{eqn:differencem1} and noting $m(a) \geq 0$,} we have
\begin{equation}
\label{eq:m_{2}}
      \begin{split}
      m_{2}(t)-m_{2}(s)
      & =m(t)-m(s)-(m_{1}(t)-m_{1}(s))\\
      & \leq
       -\delta \int_{s}^{t} (m(u)-m_{1}(u)+m(a))
       \, \dd u\\
      & \leq
       -\delta  \int_{s}^{t} (m(u)-m_{1}(u))\, \dd u\\
      & =
       -\delta \int_{s}^{t} m_{2}(u)\, \dd u.
      \end{split}
\end{equation}
To prove \eqref{eq:m(a)}, it {\color{black} suffices to show} that $m_{2}(t)\leq 0$ for any $t \in \mathbb{R}$.
If $m_{2}(t) > 0$ for some $t$, 
then $m_{2}(a)=0$ implies that there exists an interval $ [s_{1},t_{1}] \subset [a,\infty],\,m_{2}(t_{1}) >m_{2}(s_{1})$ and $ m_{2}>0 $ on $ [s_{1}, t_{1}]$, which contradict \eqref{eq:m_{2}}.
\end{proof}

{\color{black}We now give} the time-uniform moment boundedness of the exact solution {\color{black} of} {\color{black}SDE} \eqref{eq:Problem_SDE}.
\begin{thm} 
\label{thm:The p-th moment of the SDE solution}
Let Assumptions 
\ref{ass} hold. Let $X^{-k\tau}_{t}$ be the solution  of SDE \eqref{eq:Problem_SDE} with the initial value $X^{-k\tau}_{-k\tau}=\xi$ obeying
$\E \big[\|\xi\|^{2p} \big]< \infty$ for any $p\in [1,\infty)$. Then there exists a positive constant $C$ {\color{black} such that}
\begin{equation}
\label{eq:the_p_th_(X(t))}
\sup_{k\in \N}
\sup_{t\geq -k\tau}\E
\big[ 
{\| X^{-k\tau}_{t} \|}^{2p} 
\big]
\leq
C\big( 1+\E [\|\xi\|^{2p}] \big).
\end{equation}
\end{thm}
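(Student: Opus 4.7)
The plan is to reduce the time-uniform $L^{2p}$-bound to the deterministic inequality supplied by Lemma \ref{lem:ito lemma} applied to $m(t):=\E[\|X^{-k\tau}_t\|^{2p}]$. To that end, I would apply It\^o's formula to the $C^2$-regularisation of $x\mapsto\|x\|^{2p}$ (working first with $(\varepsilon+\|X^{-k\tau}_t\|^2)^p$ and passing $\varepsilon\downarrow0$, together with a localising sequence of stopping times to turn the stochastic integral into a true martingale before taking expectation). This yields, for any $s\le t$ in $[-k\tau,\infty)$,
\begin{equation*}
\begin{split}
\|X^{-k\tau}_t\|^{2p}-\|X^{-k\tau}_s\|^{2p}
&=\int_s^t 2p\|X^{-k\tau}_u\|^{2p-2}\langle X^{-k\tau}_u,-\Lambda X^{-k\tau}_u+f(u,X^{-k\tau}_u)\rangle\,\dd u\\
&\quad+\int_s^t\Big(p\|X^{-k\tau}_u\|^{2p-2}\|g(u)\|^2+2p(p-1)\|X^{-k\tau}_u\|^{2p-4}\|g(u)^T X^{-k\tau}_u\|^2\Big)\dd u\\
&\quad+\int_s^t 2p\|X^{-k\tau}_u\|^{2p-2}\langle X^{-k\tau}_u,g(u)\,\dd W_u\rangle.
\end{split}
\end{equation*}

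\noindent Next I would combine the spectral bound $\langle x,-\Lambda x\rangle\le -\lambda_1\|x\|^2$ from Assumption \ref{ass}(i), the one-sided estimate $\langle x,f(u,x)\rangle\le c_f(1+\|x\|^2)$ from Assumption \ref{ass}(ii), and the uniform bound $\|g(u)\|\le c_g$ from Assumption \ref{ass}(iii). The drift contribution produces the crucial coefficient $-2p(\lambda_1-c_f)$ in front of $\|X^{-k\tau}_u\|^{2p}$, which is strictly negative by the standing assumption $c_f<\lambda_1$. The remaining terms are of lower order in $\|X^{-k\tau}_u\|$; using Young's inequality $\|X\|^{2p-2}\le \varepsilon\|X\|^{2p}+C_\varepsilon$ with $\varepsilon$ small enough that the aggregate coefficient in front of $\|X\|^{2p}$ remains $\le -\delta$ for some $\delta\in(0,2p(\lambda_1-c_f))$, I would arrive at a pathwise bound of the form
\begin{equation*}
\|X^{-k\tau}_t\|^{2p}-\|X^{-k\tau}_s\|^{2p}\le -\delta\int_s^t\|X^{-k\tau}_u\|^{2p}\,\dd u+\int_s^t K\,\dd u+M_t-M_s,
\end{equation*}
where $K$ is a finite constant depending only on $p,\lambda_1,c_f,c_g$ and $M$ is a local martingale, which becomes a true martingale after localisation. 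Taking expectation (and removing the localisation via Fatou on one side and monotone/dominated convergence on the other, using the a priori finite moments for fixed $t$ that follow from standard SDE theory under Assumptions \ref{ass}--\ref{ass:f(t,x)_c_{f}}) produces
\begin{equation*}
m(t)-m(s)\le -\delta\int_s^t m(u)\,\dd u+\int_s^t K\,\dd u.
\end{equation*}

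\noindent Finally I would invoke Lemma \ref{lem:ito lemma} with $a=-k\tau$ and $\psi\equiv K$ to obtain
\begin{equation*}
m(t)\le m(-k\tau)+\int_{-k\tau}^t Ke^{-\delta(t-u)}\,\dd u\le \E\big[\|\xi\|^{2p}\big]+\frac{K}{\delta},
\end{equation*}
which is uniform in both $k\in\N$ and $t\ge -k\tau$ and gives \eqref{eq:the_p_th_(X(t))} with $C:=\max\{1,K/\delta\}$. The main obstacle I anticipate is the careful bookkeeping needed to make the It\^o computation rigorous at $X=0$ and to justify the interchange of expectation with the stochastic integral; once that is done, the calibration of $\varepsilon$ in Young's inequality to preserve a strictly negative drift coefficient is the only delicate quantitative step, and it succeeds precisely because Assumption \ref{ass}(ii) demands $c_f<\lambda_1$ with strict inequality, leaving a positive spectral gap to absorb the lower-order diffusion contributions.
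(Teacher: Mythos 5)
Your proposal is correct and follows essentially the same route as the paper: It\^o's formula applied to a regularised power of the solution, the spectral gap $\lambda_1-c_f>0$ combined with Young's inequality to absorb the lower-order diffusion terms, localisation plus Fatou to handle the stochastic integral, and finally Lemma \ref{lem:ito lemma} with constant $\psi$. The only cosmetic difference is that the paper works with $(1+\|x\|^2)^p$ throughout, which is $C^2$ everywhere and so avoids your $\varepsilon$-regularisation and the limit $\varepsilon\downarrow 0$ at $x=0$.
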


\begin{proof}[Proof of Theorem \ref{thm:The p-th moment of the SDE solution}]
Using the It\^o formula gives
\begin{equation}
	\begin{split}
		 \big(1+ {\| X^{-k\tau}_{t}\|}^{2}\big)^{p}
		=&
		 \big(1+ {\|\xi\|}^{2}\big)^{p}
		  +2p \int_{-k\tau}^{t}
           \big(1+{\| X^{-k\tau}_{s}\|}^{2}\big)^{p-1}
           \langle 
           X^{-k\tau}_{s},
           -\Lambda X^{-k\tau}_{s}
           \rangle
           \, \dd s \\
		&
		 +2p \int_{-k\tau}^{t}
          \big( 1+ {\| X^{-k\tau}_{s}\|}^{2} \big)^{p-1}
          \langle 
          X^{-k\tau}_{s},
          f(s,X^{-k\tau}_{s})
          \rangle
          \, \dd s \\		
		&
		 +2p\int_{-k\tau}^{t}
          \big( 1+{\| X^{-k\tau}_{s}\|}^{2} \big)^{p-1}
		 \langle 
		 X^{-k\tau}_{s},
		 g(s)       
		 \, \dd W_{s}
		 \rangle \\
		&
		 +p\int_{-k\tau}^{t}
          \big( 1+{\|X^{-k\tau}_{s}\|}^{2} \big)^{p-1}
          \| g(s) \|^{2}
          \, \dd s\\
		&
		 +2p(p-1)\int_{-k\tau}^{t}
          \big( 1+{\|X^{-k\tau}_{s}\|}^{2} \big)^{p-2}
		 \| ( X^{-k\tau}_{s}) ^{T}g(s)\|^{2} 
		 \, \dd s.
	\end{split}
\end{equation}
{\color{black} This directly leads to }
\begin{equation}
	\begin{split}
		 \big( 1+{\| X^{-k\tau}_{t}\|}^{2} \big)^{p}
		\leq 
        &
		 \big(1+{\|\xi\|}^{2} \big)^{p}
		 +2p \int_{-k\tau}^{t}
          \big( 1+{\| X^{-k\tau}_{s}\|}^{2} \big)^{p-1}
		 \langle 
		 X^{-k\tau}_{s},
		 -\Lambda X^{-k\tau}_{s}
		 \rangle
		 \, \dd s \\
		&
		 +2p\int_{-k\tau}^{t}
          \big( 1+ {\| X^{-k\tau}_{s}\|}^{2} \big)^{p-1}
		 \langle 
		 X^{-k\tau}_{s},
		 f(s,X^{-k\tau}_{s})
		 \rangle
		 \, \dd s \\		
		&
		 +2p\int_{-k\tau}^{t}
          \big(1+ {\| X^{-k\tau}_{s}\|}^{2} \big)^{p-1}
		 \langle 
		 X^{-k\tau}_{s},
		 g(s)       
		 \, \dd W_{s}
		 \rangle \\
		&
		 +p(2p-1)\int_{-k\tau}^{t}
          \big(1+ {\| X^{-k\tau}_{s}\|}^{2} \big)^{p-1}
		 \| g(s) \|^{2} 
		 \, \dd s .\\
	\end{split}
\end{equation}
Combining  this with \eqref{eq:Lambda} 
and \eqref{eq:c_f} 
we can get
\begin{equation}
    \begin{split}
         \big( 1+\|X^{-k\tau}_{t}\|^{2} \big)^{p}
        \leq 
        &
          \big( 1+\|\xi\|^{2} \big)^{p}
          +2p\int_{-k\tau}^{t}
          \big( 1+\|X^{-k\tau}_{s}\|^{2} \big)^{p-1}
          \big(
         -\lambda_{1} 
         (1+\|X^{-k\tau}_{s}\|^{2}) 
         \big)
         \, \dd s \\
        &
         +2pc_{f}\int_{-k\tau}^{t}
         \big( 1+\|X^{-k\tau}_{s}\|^{2} \big)^{p}
         \, \dd s \\
        &
         +2p\int_{-k\tau}^{t}
         \big( 1+ {\| X^{-k\tau}_{s}\|}^{2} \big)^{p-1}
         \langle 
         X^{-k\tau}_{s},
         g(s)       
         \, \dd W_{s}
         \rangle \\
        &
         +p 
         \big(
         (2p-1)c_{g}^{2} + 2\lambda_{1}
         \big)
         \int_{-k\tau}^{t}
         \big( 1+ {\| X^{-k\tau}_{s}\|}^{2} \big)^{p-1}
         \, \dd s\\
		\leq 
        &
		 \big( 1+ {\|\xi\|}^{2} \big)^{p}
		 -2p(\lambda_{1}-c_{f})
		 \int_{-k\tau}^{t}
          \big( 1+ {\| X^{-k\tau}_{s}\|}^{2} \big)^{p}
		 \, \dd s \\
		&
		 +2p\int_{-k\tau}^{t}
          \big( 1+ {\| X^{- k\tau}_{s}\|}^{2}\big)^{p-1}
		 \langle 
		 X^{-k\tau}_{s},
		 g(s)       
		 \, \dd W_{s}
		 \rangle \\
		&
		 +p\big((2p-1)c_{g}^{2} + 2\lambda_{1}\big)
		 \int_{-k\tau}^{t}
          \big( 1+ \|{ X^{-k\tau}_{s}\|}^{2}\big)^{p-1}
		 \, \dd s,
	\end{split}
\end{equation}
{\color{black}where
\begin{equation}
    \begin{split}
        & p
        \big(
        (2p-1)c_{g}^{2} + 2\lambda_{1}
        \big)
        \big( 
        1+\|{ X^{-k\tau}_{s}\|}^{2}
        \big)^{p-1}\\
        & \quad=
        p(\lambda_{1}-c_{f})
        \big( 
        1+\|{ X^{-k\tau}_{s}\|}^{2}
        \big)^{p-1}
        \times
        \tfrac{(2p-1)c_{g}^{2} + 2\lambda_{1}}{\lambda_{1}-c_{f}}
        \\
        & \quad \leq
        p(\lambda_{1}-c_{f})
        \times \tfrac{p-1}{p}
        \big( 
        1+\|{ X^{-k\tau}_{s}\|}^{2}
        \big)^{p-1}
        +
        p(\lambda_{1}-c_{f})
        \times \tfrac{1}{p}
        \tfrac{((2p-1)c_{g}^{2} + 2\lambda_{1})^{p}}{(\lambda_{1}-c_{f})^{p}}\\
        & \quad =   
        (p-1)(\lambda_{1}-c_f)
        \big( 
        1+\|{ X^{-k\tau}_{s}\|}^{2} 
        \big)^{p}
        +
        \tfrac
         {((2p-1)c_{g}^{2} + 2\lambda_{1})^{p}}{(\lambda_{1}-c_f)^{p-1}}
    \end{split}
\end{equation}
by the Young inequality
$a^{p-1}b \leq \frac{p-1}{p}a^{p}+\frac{1}{p}b^{p}$.
} Therefore
\begin{equation}\label{eq:no E}
	\begin{split}
		 \big( 1+ {\|X^{-k\tau}_{t}\|}^{2}\big)^{p}
		\leq 
        &
		 \big( 1+ {\|\xi\|}^{2} \big)^{p}
		 -(p+1)(\lambda_{1}-c_{f})
          \int_{-k\tau}^{t}
          \big( 1+\|{ X^{-k\tau}_{s}\|}^{2} \big)^{p}
		 \, \dd s \\
		&
		 +\int_{-k\tau}^{t} 
          \tfrac{((2p-1)c_{g}^{2}+2\lambda_{1}))^{p}}{(\lambda_{1}-c_{f})^{p-1}}
		 \, \dd s\\
		&
		 +2p \int_{-k\tau}^{t}
          \big( 1+ {\| X^{-k\tau}_{s}\|}^{2} \big)^{p-1}
		 \langle 
		 X^{-k\tau}_{s},
		 g(s)       
		 \, \dd W_{s}
		 \rangle .\\		
	\end{split}
\end{equation}
For every interval $n\geq 1$, define the stopping time
\begin{equation}
\tau_{n}= \inf \{s\in [-k\tau,t]:
{\color{black}\|X_{s}^{-k\tau}\|}\geq n\}.
\end{equation}
Clearly, $\tau_{n} \uparrow t$ a.s.. Moreover, 
it follows from (\ref{eq:no E}) and the property of the It\^o integral that
\begin{equation}
    \begin{split}
        \E[(1+ 
        {\| X^{-k\tau}_{t \wedge \tau_{n}}\|}^{2})^{p}]
        \leq
        &
         \E[ (1+ {\|\xi\|}^{2})^{p} ]
         -(p+1) (\lambda_{1}-c_{f})
         \E \Big
         [\int_{-k\tau}^{t \wedge \tau_{n}}
         \big( 1+\| { X^{-k\tau}_{s}\|}^{2} \big)^{p} 
         \, \dd s \Big]\\
        & 
         +\E\Big[
         \int_{-k\tau}^{\color{black}t \wedge \tau_{n}} 
         \tfrac{((2p-1)c_{g}^{2}+2\lambda_{1})^{p}}{(\lambda_{1}-c_{f})^{p-1}}
         \, \dd s \Big].
    \end{split}
\end{equation}
Letting $n \rightarrow \infty$ and by the {\color{black}Fatou's} lemma, we have
\begin{equation}\label{eqn:fatou}
	\begin{split}
		 \E[(1+ {\| X^{-k\tau}_{t}\|}^{2})^{p}]
		\leq
        &
		 \E[ (1+ {\|\xi\|}^{2})^{p} ]
		 -(p+1) (\lambda_{1}-c_{f})
		 \int_{-k\tau}^{t}
		 \E \Big[
          (1+\| { X^{-k\tau}_{s}\|}^{2})^{p}
          \Big]
		 \, \dd s \\
		& 
		 +\int_{-k\tau}^{t} 
          \tfrac{((2p-1)c_{g}^{2}+2\lambda_{1})^{p}}
		 {(\lambda_{1}-c_{f})^{p-1}}
		 \, \dd s .
	\end{split}
\end{equation}
{\color{black}Now applying Lemma \ref{lem:ito lemma} to \eqref{eqn:fatou} with $\delta=(p+1)(\lambda_{1}-c_{f} )>0$ and $\psi=\tfrac{((2p-1)c_{g}^{2}
	+2\lambda_{1})^{p}}{(\lambda_{1}-c_{f})^{p-1}}$ gives}
 \begin{equation}
     \begin{split}
          \E[(1+ {\| X^{-k\tau}_{t}\|}^{2})^{p}]
         & \leq
           \E[ (1+ {\|\xi\|}^{2})^{p} ]
           +\int_{-k\tau}^{t}
           e^{-
        (p+1)(\lambda_{1}-c_{f})(t-u)}
           \tfrac{((2p-1)c_{g}^{2}+2\lambda_{1})^{p}}
           {(\lambda_{1}-c_{f})^{p-1}}
          \,\dd u\\
         & \leq
           \E[ (1+ {\|\xi\|}^{2})^{p} ]
           +\tfrac{((2p-1)c_{g}^{2}+2\lambda_{1})^{p}}
           {(p+1)(\lambda_{1}-c_{f})^{p}}
           (1-e^{-(p+1)(\lambda_{1}-c_{f})(t+k\tau)})\\
         & \leq
           C\big( 1+\E [\|\xi\|^{2p}] \big),
     \end{split}
 \end{equation}
which completes the proof.
\end{proof}
{\color{black}
\begin{rem}
\label{rem:moment-bounds}
The above time-uniform moment boundedness was obtained under more relaxed conditions, compared to
\cite[Proposition 14]{wu2022backward},
where the author additionally required $(c_f + \tfrac{(p-1)c_g}{2} ) ( 2 + p + 2^{p+1}) < p \lambda_1$ for some
positive number $p \geq 4 \gamma - 2$ (cf. \cite[Assumption 13]{wu2022backward}).
Instead, here we just require $0< c_{f} < \lambda_{1}$, which significantly relaxes  
the aforementioned condition.
\end{rem}
}

\section{The order-one convergence of the backward Euler method}
\label{sec:strong-rate-BEM}
{\color{black} This section is devoted to the error analysis of the strong convergence rate of the backward Euler approximation to SDE \eqref{eq:Problem_SDE}, where we lift the order of convergence to one from half as obtained in \cite{wu2022backward}.  }
Take an  equidistant partition 
$\mathcal{T}^{h}:=\{jh,j \in \mathbb{Z}\}$, 
such that $h\in(0,1)$. 
Note that $\mathcal{T}^{h}$ stretch along the real line because we are dealing with an infinite time horizon problem.
The backward Euler method  applied to SDE \eqref{eq:Problem_SDE} takes the following form: 
\begin{equation}
\label{eq:backward_Euler_method}
{\tilde{X}}
^{-k\tau}_{-k\tau+jh}
=
 {\tilde{X}}
 ^{-k\tau}_{-k\tau+(j-1)h}
 -\Lambda
 h{\tilde{X}}^{-k\tau}_{-k\tau+jh}
+ hf
 \big( jh, 
 {\tilde{X}}^{-k\tau}_{-k\tau+jh}
 \big) 
+g((j-1)h)\Delta W_{-k\tau+(j-1)h}
\end{equation}
for all $ j \in \mathbb{N}$, 
where 
$\Delta W_{-k\tau+(j-1)h}:=
W_{-k\tau+jh}-W_{-k\tau+(j-1)h}$,
and the initial value 
$\tilde{X}^{-k\tau}_{-k\tau}=\xi$.
Because of the periodicity of $f$ and $g$, we have that $f(-k\tau+jh,{\tilde{X}}^{-k\tau}_{-k\tau+jh}) =f(jh,{\tilde{X}}^{-k\tau}_{-k\tau+jh})$, 
$g(-k\tau+jh) =g(jh)$.

{\color{black}Before proceeding to the assumptions and the main proof of the error analysis, we present some existing and essential results regarding \eqref{eq:backward_Euler_method} from literature. The uniform bounds for the second moment of the numerical approximation have been established in \cite[Lemma 17]{wu2022backward} as follows.}
\begin{prop}\label{prop:second moment of NS}
Let Assumptions \ref{ass} be satisfied.
Then there exists {\color{black}a constant} $\tilde{C}>0$ such that
\begin{equation}
\sup_{k,j\in \mathbb{N}}
\E[\| \tilde{X}^{-k\tau}_{-k\tau+jh} \|^2]
\leq
\tilde{C},
\end{equation}
where $\{\tilde{X}^{-k\tau}_{-k\tau+jh}\}_{k,j \in \mathbb{N}}$ is given by \eqref{eq:backward_Euler_method}.
\end{prop}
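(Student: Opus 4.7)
The plan is to derive a one-step recursion for $\E\|\tilde X^{-k\tau}_{-k\tau+jh}\|^2$ and iterate it, exploiting the contractive structure coming from Assumption \ref{ass}(i)--(ii). Writing $X_j := \tilde{X}^{-k\tau}_{-k\tau+jh}$ and $\Delta W_{j-1} := \Delta W_{-k\tau+(j-1)h}$, the scheme \eqref{eq:backward_Euler_method} reads $X_j - X_{j-1} = -h\Lambda X_j + hf(jh, X_j) + g((j-1)h)\Delta W_{j-1}$. I would begin by taking the inner product of this identity with $X_j$ and invoking the elementary polarisation formula
\begin{equation*}
2\langle X_j, X_j - X_{j-1}\rangle = \|X_j\|^2 - \|X_{j-1}\|^2 + \|X_j - X_{j-1}\|^2,
\end{equation*}
which converts the scheme into an estimate for $\|X_j\|^2$ in terms of $\|X_{j-1}\|^2$, the dissipative contributions from $-\Lambda$ and $f$, a stochastic term, and the ``extra'' quantity $\|X_j - X_{j-1}\|^2$ that we will use for absorption.

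Next I would apply Assumption \ref{ass}(i) to bound $\langle X_j, -\Lambda X_j\rangle \leq -\lambda_1 \|X_j\|^2$ and the second bound in \eqref{eq:c_f} to bound $\langle X_j, f(jh, X_j)\rangle \leq c_f(1+\|X_j\|^2)$. These collectively provide the crucial negative term $-2h(\lambda_1 - c_f)\|X_j\|^2$ on the right-hand side, which is strictly negative thanks to $c_f < \lambda_1$ and will yield a contraction factor after rearrangement.

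The main obstacle is the stochastic term $2\langle X_j, g((j-1)h)\Delta W_{j-1}\rangle$: because the backward Euler step is implicit, $X_j$ is not $\mathcal{F}^{-k\tau+(j-1)h}$-measurable and the increment $\Delta W_{j-1}$ is not independent of $X_j$. I would handle this by the splitting $X_j = X_{j-1} + (X_j - X_{j-1})$. The first piece $2\langle X_{j-1}, g((j-1)h)\Delta W_{j-1}\rangle$ has zero expectation since $X_{j-1}$ is $\mathcal{F}^{-k\tau+(j-1)h}$-measurable. The second piece $2\langle X_j - X_{j-1}, g((j-1)h)\Delta W_{j-1}\rangle$ is controlled via Young's inequality by $\|X_j - X_{j-1}\|^2 + \|g((j-1)h)\Delta W_{j-1}\|^2$, and the former is exactly cancelled by the term produced by the polarisation identity. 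Using Assumption \ref{ass}(iii) together with $\E\|\Delta W_{j-1}\|^2 = dh$ then bounds the remaining noise contribution by $c_g^2 d h$.

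Taking expectations and rearranging, I expect to arrive at a recursion of the form
\begin{equation*}
\bigl(1 + 2h(\lambda_1 - c_f)\bigr)\E\|X_j\|^2 \leq \E\|X_{j-1}\|^2 + h\bigl(2c_f + c_g^2 d\bigr).
\end{equation*}
Setting $\alpha := 1 + 2h(\lambda_1 - c_f) > 1$ and iterating from $j=0$ with $X_0 = \xi$ gives
\begin{equation*}
\E\|X_j\|^2 \leq \alpha^{-j}\E\|\xi\|^2 + \frac{h(2c_f + c_g^2 d)}{\alpha - 1}\sum_{i=0}^{j-1}\alpha^{-i} \leq \E\|\xi\|^2 + \frac{2c_f + c_g^2 d}{2(\lambda_1 - c_f)},
\end{equation*}
a bound that is manifestly independent of both $j$ and $k$; combined with Assumption \ref{ass}(iv) this yields the desired $\tilde C$. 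The genuinely non-routine step is the absorption argument tied to the polarisation identity; everything else is standard Gronwall-type iteration.
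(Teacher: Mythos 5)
Your proof is correct in substance, and it is worth pointing out that the paper does not actually prove Proposition \ref{prop:second moment of NS}: it imports the statement verbatim from \cite[Lemma 17]{wu2022backward}. Your argument therefore supplies a self-contained proof where the paper relies on a citation. The technique in the cited work (echoed in a commented-out passage of this paper's source treating higher moments) is slightly different: it moves all implicit terms to one side, squares the resulting identity $\|(I+h\Lambda)\tilde X_j - hf(jh,\tilde X_j)\|^2 = \|\tilde X_{j-1} + g((j-1)h)\Delta W_{j-1}\|^2$, bounds the left side from below by $(1+2(\lambda_1-c_f)h)\|\tilde X_j\|^2$ via \eqref{eq:Lambda} and \eqref{eq:c_f}, and removes the cross term on the right by conditioning on the past. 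Your route --- pairing the scheme with $X_j$, invoking the polarisation identity, splitting the noise term as $\langle X_{j-1},\cdot\rangle + \langle X_j - X_{j-1},\cdot\rangle$, and absorbing $\|X_j - X_{j-1}\|^2$ --- is an equally valid energy argument for implicit schemes: it trades the conditioning step for the splitting/absorption step, and it reaches essentially the same contraction $(1+2(\lambda_1-c_f)h)\,\E[\|X_j\|^2] \leq \E[\|X_{j-1}\|^2] + h(2c_f + c_g^2 d)$, whose iteration is immune to the length of the time horizon, exactly as needed for uniformity in $k$ and $j$.

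Two small repairs. First, your penultimate display double-counts a factor: iterating the recursion gives $\E[\|X_j\|^2] \leq \alpha^{-j}\E[\|\xi\|^2] + h(2c_f+c_g^2d)\sum_{i=1}^{j}\alpha^{-i}$, and the geometric series is bounded by $\frac{1}{\alpha-1}=\frac{1}{2h(\lambda_1-c_f)}$, which produces your (correct) final constant; as written, however, you multiply the sum by $\frac{h(2c_f+c_g^2d)}{\alpha-1}$, i.e.\ you include both the series and its bound, and from that expression the last inequality would not follow (it is of size $h^{-1}$ for small $h$). Second, your appeal to $\E[\langle X_{j-1}, g((j-1)h)\Delta W_{j-1}\rangle]=0$ tacitly uses that each $X_j$ is well defined, square-integrable and $\mathcal{F}_{jh-k\tau}$-measurable; this follows from the strong monotonicity of $x\mapsto (I+h\Lambda)x - hf(jh,x)$ guaranteed by $c_f<\lambda_1$, together with a one-line induction on $j$, and deserves a sentence in a complete write-up. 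Neither point affects the validity of the approach.
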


{\color{black} The existence and uniqueness of random periodic solutions associated to the backward Euler method has been guaranteed in \cite{wu2022backward}. }
\begin{thm}
\label{thm:BEM random period solution}
Let Assumptions \ref{ass}  be satisfied. For $h \in (0,1)$, the time domain is divided as {\color{black}$\mathcal{T}^{h}$}.
The backward Euler method \eqref{eq:backward_Euler_method} admits a random period solution $\tilde{X}^{*}\in L^{2}(\Omega)$ such that
\begin{equation}
\lim_{k \rightarrow \infty }
 \E\big[
 \|\tilde{X}_{-k\tau+jh}^{-k\tau}(\xi)
 -\tilde{X}^{*}\|^{2}
 \big]
 =0.
\end{equation}
\end{thm}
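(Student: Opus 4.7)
The plan is to mirror the pull-back construction of Theorem~\ref{thm:unique_random_periodic_solution} at the discrete level, exactly as in \cite{wu2022backward}. Three ingredients drive the argument: a strict one-step contraction of the backward Euler map coming from the dissipativity $\lambda_{1}>c_{f}$, the uniform second-moment bound of Proposition~\ref{prop:second moment of NS}, and completeness of $L^{2}(\Omega)$. Throughout, I restrict to stepsizes with $\tau/h\in\N$ so that the grid $\mathcal{T}^{h}$ is compatible with the period.

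First, I would derive the discrete contraction estimate. Given two runs of \eqref{eq:backward_Euler_method} driven by the same Brownian path with initial values $\xi,\eta$, the error $e_{j}:=\tilde{X}^{-k\tau}_{-k\tau+jh}(\xi)-\tilde{X}^{-k\tau}_{-k\tau+jh}(\eta)$ satisfies
\begin{equation*}
(I+h\Lambda)e_{j+1}-h\bigl[f((j+1)h,\tilde{X}^{-k\tau}_{-k\tau+(j+1)h}(\xi))-f((j+1)h,\tilde{X}^{-k\tau}_{-k\tau+(j+1)h}(\eta))\bigr]=e_{j}.
\end{equation*}
Taking the inner product with $e_{j+1}$ and invoking Assumption~\ref{ass}(i),(ii) to bound $\langle e_{j+1},\Lambda e_{j+1}\rangle\geq \lambda_{1}\|e_{j+1}\|^{2}$ and the one-sided Lipschitz term by $c_{f}\|e_{j+1}\|^{2}$, followed by Cauchy--Schwarz on the right, yields
\begin{equation*}
\|e_{j+1}\|\leq \frac{1}{1+h(\lambda_{1}-c_{f})}\|e_{j}\|=:\rho\,\|e_{j}\|,\qquad \rho\in(0,1),
\end{equation*}
and hence $\E[\|e_{j}\|^{2}]\leq \rho^{2j}\,\E[\|\xi-\eta\|^{2}]$.

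Second, I would upgrade this into a Cauchy property for the pull-back sequence. By the flow property of the implicit scheme, for $k_{2}>k_{1}$,
\begin{equation*}
\tilde{X}^{-k_{2}\tau}_{-k_{1}\tau+jh}(\xi)=\tilde{X}^{-k_{1}\tau}_{-k_{1}\tau+jh}\bigl(\tilde{X}^{-k_{2}\tau}_{-k_{1}\tau}(\xi)\bigr).
\end{equation*}
Applying the contraction above with the initial pair $\xi$ and $\tilde{X}^{-k_{2}\tau}_{-k_{1}\tau}(\xi)$, and controlling the second moment of the latter via Proposition~\ref{prop:second moment of NS}, produces
\begin{equation*}
\E\bigl[\|\tilde{X}^{-k_{1}\tau}_{-k_{1}\tau+jh}(\xi)-\tilde{X}^{-k_{2}\tau}_{-k_{1}\tau+jh}(\xi)\|^{2}\bigr]\leq \rho^{2j}\bigl(2\E[\|\xi\|^{2}]+2\tilde{C}\bigr).
\end{equation*}
Fixing the evaluation time $t=-k_{1}\tau+jh$ and writing $j=(t+k_{1}\tau)/h$ forces $j\to\infty$ as $k_{1}\to\infty$, so the right-hand side vanishes. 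Completeness of $L^{2}(\Omega)$ then delivers a limit $\tilde{X}^{*}$, and independence of the limit from $\xi$ is a direct consequence of the same contraction applied to two arbitrary initial data.

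Finally, I would verify that $\tilde{X}^{*}$ is indeed random periodic. The $\tau$-periodicity of $f$ and $g$ together with the measure-preserving Wiener shift $\theta_{\tau}$ (which identifies noise increments on $[-(k+1)\tau,-k\tau]$ with those on $[-k\tau,-(k-1)\tau]$) give the pathwise identity $\tilde{X}^{-(k+1)\tau}_{-k\tau+jh}(\xi)(\omega)=\tilde{X}^{-k\tau}_{-k\tau+jh}(\xi)(\theta_{\tau}\omega)$, and passing to the $L^{2}$-limit in $k$ transfers this relation to $\tilde{X}^{*}$, yielding the discrete counterpart of Definition~\ref{def:rps}. The main obstacle is purely bookkeeping: the grid must be aligned with the period so that $\theta_{\tau}$ acts on mesh points, and the semigroup identity must be invoked with careful index alignment. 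Aside from these technicalities, the argument is a verbatim discretisation of the continuous-time proof of Theorem~\ref{thm:unique_random_periodic_solution}.
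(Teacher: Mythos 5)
Your three-ingredient strategy (one-step contraction from the dissipativity gap $\lambda_{1}-c_{f}>0$, the uniform moment bound of Proposition \ref{prop:second moment of NS}, completeness of $L^{2}(\Omega)$ applied to the pull-back sequence) is exactly the argument this theorem rests on; note, however, that the paper itself supplies no proof at all — the result is quoted as already established in \cite{wu2022backward} (see the sentence immediately preceding the statement), where, as in \cite{feng2017numerical}, the proof follows precisely this route. Your contraction estimate is correct (the additive noise cancels in the difference, so the bound $\|e_{j+1}\|\leq\|e_j\|/(1+h(\lambda_1-c_f))$ is pathwise), the discrete semiflow identity and the resulting Cauchy property are correct, and your remark that one needs the grid to be compatible with the period, $\tau/h\in\N$, is a genuine requirement that the paper leaves implicit in the definition of $\mathcal{T}^h$.

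One concrete slip needs fixing: the pathwise shift identity you invoke for periodicity is mis-indexed and false as written. In
\begin{equation*}
\tilde{X}^{-(k+1)\tau}_{-k\tau+jh}(\xi)(\omega)=\tilde{X}^{-k\tau}_{-k\tau+jh}(\xi)(\theta_{\tau}\omega)
\end{equation*}
the left-hand side is the result of $\tau/h+j$ steps driven by the increments of $\omega$ on $[-(k+1)\tau,\,-k\tau+jh]$, whereas the right-hand side is the result of only $j$ steps driven by the increments of $\theta_\tau\omega$ on $[-k\tau,\,-k\tau+jh]$, which by $\Delta W_u(\theta_\tau\omega)=\Delta W_{u+\tau}(\omega)$ are the increments of $\omega$ on $[-(k-1)\tau,\,-(k-1)\tau+jh]$: different step counts and different noise windows. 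The correct identity keeps the step count equal on both sides; since the coefficient evaluations of \eqref{eq:backward_Euler_method} are $\tau$-periodic,
\begin{equation*}
\tilde{X}^{-k\tau}_{-k\tau+jh}(\xi)(\theta_{\tau}\omega)
=\tilde{X}^{-(k-1)\tau}_{-(k-1)\tau+jh}(\xi)(\omega),
\end{equation*}
i.e.\ with the evaluation time $t=-k\tau+jh$ fixed, $\tilde{X}^{-k\tau}_{t}(\xi)\circ\theta_\tau=\tilde{X}^{-(k-1)\tau}_{t+\tau}(\xi)$. Passing to the $L^{2}$-limit in $k$ on both sides (composition with the measure-preserving $\theta_\tau$ preserves $L^{2}$-convergence) gives $\tilde{X}^{*}_{t+\tau}=\tilde{X}^{*}_{t}\circ\theta_\tau$, which is the periodicity required by Definition \ref{def:rps}. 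With that correction your proposal is a complete and faithful reconstruction of the argument the paper delegates to \cite{wu2022backward}.
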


In order to establish a strong convergence rate {\color{black} as high as one} of the backward Euler method, we need the following conditions {\color{black} on the drift as well as the initial condition besides Assumption \ref{ass}.}

\begin{assumption}\label{ass:f^{'}}
Assume the drift coefficient functions $f:\mathbb{R} \times \mathbb{R}^{d}\rightarrow \mathbb{R}^{d}$ are continuously differentiable, and 
there exists a constant $\gamma \in [1,\infty)$ such that
\begin{align}
\label{eq:the_esti_f'(t,x)-f'(t,y)}
\big\|
\big(
\tfrac{\partial f}{\partial x}
(t,x)
-\tfrac{\partial f}{\partial x}(t,\bar{x}{\color{black})}
\big)
y
\big\|
&\leq
C(1+\| x \| + \| \bar{x} \| )^{\gamma -2} 
\|
x - \bar{x}
\|
\|y \|,
\quad \forall x,\bar{x},y\in \mathbb{R}^{d}, \\
\label{eq:the_esti_f(t,x)-f(s,x)}
\|f(t,x) - f(s,x)\|
&\leq
C(1+\| x \| ^{\gamma}) |t-s| ,
\quad \forall x \in \mathbb{R}^{d},
s,t \in [0,\tau),
\end{align}
{\color{black} and }
$\| \xi \|_{L^{\max\{ 4\gamma, 8\gamma-8 \} }(\Omega;\R^{d})}
<\infty $.
\end{assumption}
Note that the condition \eqref{eq:the_esti_f'(t,x)-f'(t,y)}
implies
\begin{equation}
\label{eq:the_esti_f'(t,x)}
\| 
\tfrac{\partial f}{\partial x}
(t,x) y
\|
\leq
C(1+\|x \|)
^{\gamma-1}
\|y\|,
\quad \forall x,y \in  \mathbb{R}^{d}, \\
\end{equation}
which in turn implies
\begin{align}
\label{eq:the_esti_f(t,x)-f(t,y)}
\| f(t,x)-f(t,\bar{x}) \|
&\leq
C(1+\| x \| + \| \bar{x} \| )
^{\gamma -1}
\|
x - \bar{x}
\|,
\quad \forall x,\bar{x} \in  \mathbb{R}^{d},\\
\label{eq:the_esti_f(t,x)}
\| f(t,x) \|
&\leq
C(1+\|x \|)
^{\gamma},
\quad \forall x\in  \mathbb{R}^{d}.
\end{align}

{\color{black}We now present two lemmas with estimates} which
play important role in  proving the order of convergence of the backward Euler method in Theorem \ref{thm:error analysis}.
{\color{black} The first one is the} 
H\"older continuity of the exact solution of
\eqref{eq:Problem_SDE} with respect to the norm in 
$L^p(\Omega;\mathbb{R}^d)$ {\color{black} and the second one is about the regularity of $f$}.
\begin{lem}
\label{lem:the_esti_(X(t1)-X(t2)}
Let
Assumptions \ref{ass}  and \ref{ass:f^{'}}   
be hold.
Then there exists a positive constant $C$ 
which depends on $\gamma,d,\Lambda,f,g$ only, 
such that
\begin{equation}
\label{eq:the_esti_(X(t1)-X(t2)}
\|
X_{t_1}^{-k\tau}-
X_{t_2}^{-k\tau} 
\|
_{L^p(\Omega;\mathbb{R}^d)}
\leq
C
\Big(
1+\sup_{k\in \mathbb{N}} \sup_{t\geq -k\tau}
\| 
X_{t}^{-k\tau} 
\|
^\gamma_{L^{p\gamma}(\Omega;\mathbb{R}^d)}
\Big)
|t_{2}-t_{1}|
+
C |t_{2}-t_{1}|^{\frac{1}{2}},
\end{equation}
for all $t_1,t_2 \geq -k\tau$ and
{\color{black} $p \in [2,\infty)$},
where $X_{t}^{-k\tau}$ denotes the exact solution to {\color{black}the SDE}
\eqref{eq:Problem_SDE}.
\end{lem}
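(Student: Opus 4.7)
Without loss of generality assume $-k\tau \le t_1 \le t_2$. The plan is to write the increment $X^{-k\tau}_{t_2}-X^{-k\tau}_{t_1}$ directly from the integral form of the SDE \eqref{eq:Problem_SDE}, namely
\begin{equation*}
X^{-k\tau}_{t_2}-X^{-k\tau}_{t_1}
= -\int_{t_1}^{t_2}\Lambda X^{-k\tau}_s\,\dd s
+ \int_{t_1}^{t_2} f(s,X^{-k\tau}_s)\,\dd s
+ \int_{t_1}^{t_2} g(s)\,\dd W_s,
\end{equation*}
and then apply Minkowski's inequality in $L^p(\Omega;\R^d)$ to the three integrals separately. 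This avoids the variation-of-constants representation, which would produce annoying $e^{-\Lambda(t-s)}$ factors of no benefit here.

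For the first two (Bochner) integrals I would use Minkowski and bound the integrands pointwise in $s$. The $\Lambda$-term is linear, so $\|\Lambda X^{-k\tau}_s\|_{L^p}\le \lambda_d\|X^{-k\tau}_s\|_{L^p}$, which is uniformly controlled by $\sup_{k,s}\|X^{-k\tau}_s\|_{L^{p\gamma}(\Omega;\R^d)}$ since $p\le p\gamma$. For the $f$-term, the growth estimate \eqref{eq:the_esti_f(t,x)} gives
\begin{equation*}
\|f(s,X^{-k\tau}_s)\|_{L^p(\Omega;\R^d)}
\le C\bigl\|1+\|X^{-k\tau}_s\|^{\gamma}\bigr\|_{L^p}
\le C\bigl(1+\|X^{-k\tau}_s\|^{\gamma}_{L^{p\gamma}(\Omega;\R^d)}\bigr).
\end{equation*}
Integrating over $[t_1,t_2]$ and taking the supremum over $k$ and $s\ge -k\tau$ produces the first term on the right-hand side of \eqref{eq:the_esti_(X(t1)-X(t2)}, with a factor $|t_2-t_1|$.

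The stochastic integral is handled by the Burkholder--Davis--Gundy inequality: for $p\ge 2$,
\begin{equation*}
\Bigl\|\int_{t_1}^{t_2} g(s)\,\dd W_s\Bigr\|_{L^p(\Omega;\R^d)}
\le C_p \Bigl(\int_{t_1}^{t_2}\|g(s)\|^2\,\dd s\Bigr)^{1/2}
\le C_p\, c_g |t_2-t_1|^{1/2},
\end{equation*}
where the uniform bound $\sup_{s}\|g(s)\|\le c_g$ from Assumption \ref{ass}(iii) is used. Combining the three bounds yields \eqref{eq:the_esti_(X(t1)-X(t2)}.

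No step here is really an obstacle: the estimate is a routine combination of Minkowski, polynomial growth of $f$, and BDG. The only mild subtlety is bookkeeping the exponent $\gamma$ in Hölder/power terms so that one ends up exactly with the $L^{p\gamma}$-norm on the right-hand side rather than a larger $L^q$-norm, but this is immediate once one uses the pointwise bound $\|f(s,x)\|\le C(1+\|x\|^\gamma)$ inside the $L^p(\Omega)$-norm and raises $(1+\|x\|^\gamma)$ using the triangle inequality in $L^p$.
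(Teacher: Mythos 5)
Your proposal is correct and follows essentially the same route as the paper: the paper likewise works from the direct integral form of the SDE (not the variation-of-constants formula), splits off the drift terms via the triangle/Minkowski inequality and bounds them using the polynomial growth of $f$ together with the uniform moment bound of Theorem \ref{thm:The p-th moment of the SDE solution}, and treats the stochastic integral with the Burkholder--Davis--Gundy inequality to obtain the $C|t_2-t_1|^{1/2}$ term. No substantive difference.
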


\begin{proof}[Proof of Lemma \ref{lem:the_esti_(X(t1)-X(t2)}]
Without loss of generality we set $t_1 \leq t_2$ and get
\begin{equation}\label{eqn:difftrue}
\begin{split}
\|
X_{t_1}^{-k\tau}-X_{t_2}^{-k\tau} 
\|
_{L^p(\Omega;\mathbb{R}^d)}   
& =
\Big\|
\int_{t_1}^{t_2}
\big(
-\Lambda X_{r}^{k\tau}
+f(r,X_{t}^{-k\tau}
\big) 
\,\dd r
+
\int_{t_1}^{t_2} g(r)\,\dd {\color{black}W_r}
\Big\|
_{L^p(\Omega;\mathbb{R}^d)} \\
& \leq
\Big\|
\int_{t_1}^{t_2}
\big(
\Lambda X_{r}^{-k\tau}+f(r,X_{t}^{-k\tau})
\big)
\,\dd r
\Big\|
_{L^p(\Omega;\mathbb{R}^d)} \\
& \quad +
\Big\|
\int_{t_1}^{t_2} g(r)\,\dd {\color{black}W_r}
\Big\|
_{L^p(\Omega;\mathbb{R}^d)}.
\end{split}   
\end{equation}
For the first term, 
{\color{black}we can get the following estimate} using the H\"older inequality, \eqref{eq:the_p_th_(X(t))} and 
\eqref{eq:the_esti_f(t,x)} 
\begin{equation}
\begin{split}
\Big\|
& \int_{t_1}^{t_2}
\big( 
-\Lambda X_{r}^{-k\tau} + f(r,X_{t}^{-k\tau})
\big)
\,\dd r
\Big\|_{L^p(\Omega;\mathbb{R}^d)} 
\\
& \quad \leq
\int_{t_1}^{t_2}
\|
\Lambda X_{r}^{-k\tau}
\|
_{L^p(\Omega;\mathbb{R}^d)}\, \dd r
+
\int_{t_1}^{t_2}
\|
f(r,X_{t}^{-k\tau})
\|
_{L^p(\Omega;\mathbb{R}^d)}\, \dd r \\
& \quad \leq
C
\Big(
1+\sup_{k\in \mathbb{N}} \sup_{t\geq -k\tau}
\| X_{t}^{-k\tau} \|^\gamma_{L^{p\gamma}(\Omega;\mathbb{R}^d)}
\Big)
|t_2 - t_1 |. 
\end{split}
\end{equation}
{\color{black}Applying the Burkholder-Davis-Gundy inequality to the last term of \eqref{eqn:difftrue} gives} 
\begin{equation}
\Big\|
\int_{t_1}^{t_2} g(r)\,\dd {\color{black}W_r}
\Big\|_{L^p(\Omega;\mathbb{R}^d)} 
\leq
C \Big (
\int_{t_1}^{t_2}
\| g(r) \|^{2}_{L^p(\Omega;\mathbb{R}^d)}
\, \dd r
\Big )^{\frac{1}{2}}
\leq
C |t_2 - t_1 |^{\frac{1}{2}}.
\end{equation}
This completes proof.
\end{proof}

\begin{lem}
\label{lem:the_esti_f(s,x(s))-f(t,x(t)}
Let Assumptions \ref{ass}  and \ref{ass:f^{'}}  be
satisfied. 
{\color{black}Consider the exact solution $X^{-k\tau}_t$ of SDE \eqref{eq:Problem_SDE} over $[-k\tau, T]$ for arbitrary $k\in \mathbb{N}$ and $T\geq -k\tau$.}
Then there exists a positive constant $C$ 
which depends on $\gamma,d,\Lambda,f,g$ only, 
such that for all $t_1,t_2 \in [-k\tau,T]$
and $s \in [t_1,t_2 ]$, 
it holds that
\begin{equation}
\label{eq:lem:the_esti_f(s,x(s))-f(t,x(t)}
\begin{split}
\big\|
f(s,X_{s}^{-k\tau})
-f(t_2,X_{t_2}^{-k\tau}) 
\big\|_{L^2(\Omega;\mathbb{R}^d)}
& \leq 
C
\Big(
1+\sup_{k\in \mathbb{N}} \sup_{t\geq -k\tau}
\| X_{t}^{-k\tau} \|^{\gamma-1}_{L^{4\gamma-2}(\Omega;\mathbb{R}^d)}
\Big)
|t_2-t_1|^{\frac{1}{2}} \\
& \quad +
C
\Big(
1+\sup_{k\in \mathbb{N}} \sup_{t\geq -k\tau}
\| X_{t}^{-k\tau} \|^{2\gamma-1}_{L^{4\gamma-2}(\Omega;\mathbb{R}^d)}
\Big)
|t_2-t_1|.
\end{split}
\end{equation}
\end{lem}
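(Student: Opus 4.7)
The plan is to split via the triangle inequality by inserting $f(t_2, X_s^{-k\tau})$:
\begin{equation*}
 f(s, X_s^{-k\tau}) - f(t_2, X_{t_2}^{-k\tau})
 = \bigl[ f(s, X_s^{-k\tau}) - f(t_2, X_s^{-k\tau}) \bigr]
 + \bigl[ f(t_2, X_s^{-k\tau}) - f(t_2, X_{t_2}^{-k\tau}) \bigr],
\end{equation*}
and to estimate the two brackets in $L^2(\Omega;\R^d)$ separately, using respectively the temporal regularity \eqref{eq:the_esti_f(t,x)-f(s,x)} and the spatial regularity \eqref{eq:the_esti_f(t,x)-f(t,y)} of $f$. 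Throughout, Theorem \ref{thm:The p-th moment of the SDE solution} ensures the uniform moment bound $\sup_{k\in\N,\,t\geq -k\tau}\|X_t^{-k\tau}\|_{L^{4\gamma-2}(\Omega;\R^d)} < \infty$, which is the single scalar quantity that must appear in the final estimate.

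For the temporal piece, \eqref{eq:the_esti_f(t,x)-f(s,x)} yields the pathwise bound $C(1+\|X_s^{-k\tau}\|^\gamma)|s-t_2|$. Taking the $L^2(\Omega)$ norm, using $|s-t_2|\leq |t_2-t_1|$ together with the elementary inequality $a^\gamma \leq 1+a^{2\gamma-1}$ (valid for $a\geq 0$ and $\gamma \geq 1$), this contribution is absorbed into the $|t_2-t_1|$ term of the stated bound with coefficient proportional to $1+\sup\|X\|^{2\gamma-1}_{L^{4\gamma-2}}$.

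For the spatial piece, \eqref{eq:the_esti_f(t,x)-f(t,y)} gives the pathwise bound $C(1+\|X_s^{-k\tau}\|+\|X_{t_2}^{-k\tau}\|)^{\gamma-1}\,\|X_s^{-k\tau}-X_{t_2}^{-k\tau}\|$. The decisive step is to apply H\"older with the conjugate pair $p_1=(4\gamma-2)/(\gamma-1)$ and $p_2=(4\gamma-2)/\gamma$, which satisfies $1/p_1+1/p_2=1/2$ and is tuned so that the two required moment orders $p_1(\gamma-1)$ and $p_2\gamma$ both equal $4\gamma-2$. This produces
\begin{equation*}
 \|\cdot\|_{L^2} \leq C\bigl(1+\sup\nolimits_{k,t}\|X_t^{-k\tau}\|^{\gamma-1}_{L^{4\gamma-2}}\bigr)\cdot \|X_s^{-k\tau}-X_{t_2}^{-k\tau}\|_{L^{p_2}}.
\end{equation*}
Invoking Lemma \ref{lem:the_esti_(X(t1)-X(t2)} with $p=p_2\in [2,4)$ bounds the increment by $C(1+\sup\|X\|^\gamma_{L^{4\gamma-2}})|s-t_2|+C|s-t_2|^{1/2}$; multiplying out and using the identity $\|X\|^{\gamma-1}_{L^{4\gamma-2}}\cdot \|X\|^{\gamma}_{L^{4\gamma-2}} = \|X\|^{2\gamma-1}_{L^{4\gamma-2}}$ delivers exactly the two right-hand side terms of \eqref{eq:lem:the_esti_f(s,x(s))-f(t,x(t)}.

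The main obstacle I anticipate is precisely the calibration of the H\"older exponents above: a naive symmetric choice such as $p_1=p_2=4$ would force mismatched moment orders $4(\gamma-1)$ and $4\gamma$, neither of which matches the $L^{4\gamma-2}$ control available from Theorem \ref{thm:The p-th moment of the SDE solution}, so one would either pay an unnecessarily higher moment or fail to collect the two factors into the clean $2\gamma-1$ exponent. The degenerate case $\gamma=1$ uses the convention $p_1=\infty$, $p_2=2$ and is essentially trivial since the prefactor $(1+\|x\|+\|y\|)^0\equiv 1$; the remaining analysis needs $\gamma>1$ only to keep $p_1$ finite.
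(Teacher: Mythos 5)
Your proposal is correct and follows essentially the same route as the paper's proof: the paper inserts the cross term $f(s,X_{t_2}^{-k\tau})$ rather than your $f(t_2,X_s^{-k\tau})$, but the two brackets are then handled with exactly the same tools --- the temporal bound \eqref{eq:the_esti_f(t,x)-f(s,x)}, the spatial bound \eqref{eq:the_esti_f(t,x)-f(t,y)}, and H\"older with conjugate exponents $2\rho_1=\tfrac{4\gamma-2}{\gamma-1}$, $2\rho_2=\tfrac{4\gamma-2}{\gamma}$ (your $p_1,p_2$), followed by Lemma \ref{lem:the_esti_(X(t1)-X(t2)} with $p=2\rho_2$. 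Your calibration of the exponents so that both moment orders equal $4\gamma-2$ is precisely the paper's choice, so there is nothing to correct.
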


\begin{proof}[Proof of Lemma \ref{lem:the_esti_f(s,x(s))-f(t,x(t)}]
For all $s,t_2 \in [-k\tau,T]$,
it follows from 
\eqref{eq:the_esti_f(t,x)-f(t,y)} and
\eqref{eq:the_esti_f(t,x)}
that
\begin{equation}
\label{eq:the_esti_f(s,x(s))-f(t,x(t)}
\begin{split}
\big\|
f(s,X_{s}^{-k\tau})-
f(t_2,X_{t_2}^{-k\tau}) 
\big\|
& =
\big\|
f(s,X_{s}^{-k\tau})
-f(s,X_{t_2}^{-k\tau}) 
+f(s,X_{t_2}^{-k\tau})
-f(t_2,X_{t_2}^{-k\tau}) 
\big\| \\
& \leq
\big\|
f(s,X_{s}^{-k\tau})
-f(s,X_{t_2}^{-k\tau})
\big\|
+
\big\|
f(s,X_{t_2}^{-k\tau})
-f(t_2,X_{t_2}^{-k\tau}) 
\big\| \\
& \leq
C(1+\| X_{s }^{-k\tau}\| + \| X_{t_2}^{-k\tau}\| )^{\gamma -1}
\| X_{t_2}^{-k\tau} - X_{s}^{-k\tau} \| \\
& \quad +
C (1+\| X_{t_2}^{-k\tau}\|^{\gamma} )
|t_2 -s |.
\end{split}
\end{equation}
{\color{black}Taking the expectation on both sides and}
using the H\"older inequality 
{\color{black} $\|f^{\gamma-1}g\|_{L^2(\Omega;\mathbb{R}^d)} \leq
\|f\|^{\gamma-1}_{L^{2\rho_1(\gamma-1)}(\Omega;\mathbb{R}^d)}\\
+
\|g\|_{L^{2\rho_2}(\Omega;\mathbb{R}^d)}
\, (\frac{1}{2\rho_1}+\frac{1}{2\rho_2}=1)$ 
with exponents 
$\rho_1 =:\frac{2\gamma-1}{\gamma-1}$ and
$\rho_2 =:\frac{2\gamma-1}{\gamma}$ yield that} 
\begin{equation}
\begin{split}
& \big\|
f(s,X_{s}^{-k\tau})
-f(t_2,X_{t_2}^{-k\tau}) 
\big\|
_{L^2(\Omega;\mathbb{R}^d)} \\
& \quad \leq
C
\big\|
(
1
+\| X_{s}^{-k\tau}\| 
+ \| X_{t_2}^{-k\tau}\| 
)
^{\gamma -1}
\| X_{t_2 }^{-k\tau}-X_{s}^{-k\tau} \|
\big\| _{L^2(\Omega;\mathbb{R})} \\
& \qquad +
C 
\big\|
(1+\| X_{t_2}^{-k\tau}\|^{\gamma })
|t_2 -s |
\big\| _{L^2(\Omega;{\color{black}\mathbb{R})}} \\
& \quad  \leq 
C
\Big(
1+\sup_{k\in \mathbb{N}} \sup_{t\geq -k\tau}
\| X_{t}^{-k\tau}\|
^{\gamma-1}_{L^{2\rho_1(\gamma-1)}(\Omega;{\mathbb{R}^d})} 
\Big) 
\| 
X_{t_2 }^{-k\tau}-X_{s}^{-k\tau} 
\|
 _{L^{2\rho_2}(\Omega;{\color{black}\mathbb{R}^d})}\\
& \qquad +
 C
\Big(
 1+\sup_{k\in \mathbb{N}} \sup_{t\geq -k\tau}
\| X_{t}^{-k\tau}\| 
^{\gamma}_{L^{2\gamma}(\Omega;\mathbb{R}^d)}
\Big)
|t_2 -t_1 |.\\
\end{split}
\end{equation}
Moreover, through Lemma 
\ref{lem:the_esti_(X(t1)-X(t2)}  
with $p=2\rho_2$ {\color{black}we have that}
\begin{equation}
\begin{split}
\| X_{s}^{-k\tau}-X_{t_2}^{-k\tau} \|_{L^{2\rho_2}(\Omega;\mathbb{R}^d)} 
& \leq
C
\Big(
1+\sup_{k\in \mathbb{N}} \sup_{t\geq -k\tau}
\| X_{t}^{-k\tau}\| ^{\gamma}_{L^{2\gamma\rho_{2}}(\Omega;\mathbb{R}^d)}
\Big)
|t_2-s|
+C|t_2-s|^\frac{1}{2} \\
& \leq
C\Big(
1+\sup_{k\in \mathbb{N}} \sup_{t\geq -k\tau}
\| X_{t}^{-k\tau}\|^{\gamma}_{L^{4\gamma-2}(\Omega;\mathbb{R}^d)}
\Big)|t_2-t_1|
+C|t_2-t_1|^\frac{1}{2}. \\
\end{split}
\end{equation}
{\color{black}Note that $2\rho_1(\gamma-1)=4\gamma-2$. }Altogether, it follows that {\color{black}for $s \in [t_1,t_2]$}
\begin{equation}
\begin{split}
&\big\|
f(s,X_{s}^{-k\tau})
-f(t_2,X_{t_2}^{-k\tau}) 
\big\|_{L^2(\Omega;\mathbb{R}^d)}\\
& \leq 
C
\Big(
1+\sup_{k\in \mathbb{N}} \sup_{t\geq -k\tau}
\| X_{t}^{-k\tau} \|^{\gamma-1}_{L^{4\gamma-2}(\Omega;\mathbb{R}^d)}
\Big)
|t_2-t_1|^{\frac{1}{2}} \\
& \quad +
C
\Big(
1+\sup_{k\in \mathbb{N}} \sup_{t\geq -k\tau}
\big(\| X_{t}^{-k\tau} \|^{2\gamma-1}_{L^{4\gamma-2}(\Omega;\mathbb{R}^d)}
\Big)
|t_2-t_1|.
\end{split}
\end{equation}
This thus finishes
the proof of the lemma.
\end{proof}

We are now ready to give the main result of this section that reveals the order-one convergence of the backward Euler scheme to the SDE \eqref{eq:Problem_SDE} in the long run.

\begin{thm}\label{thm:error analysis}
Under Assumptions
\ref{ass}  and 
\ref{ass:f^{'}}.
For $h \in (0,1)$, the time domain is divided as {\color{black}$\mathcal{T}^{h}$}.
If  $X_{-k\tau+jh}^{-k\tau}$ and 
$\tilde{X}_{-k\tau+jh}^{-k\tau}$ 
are the exact and the numerical solutions given by \eqref{eq:Problem_SDE} and \eqref{eq:backward_Euler_method}, respectively, 
then
there exists a constant $C$ that depends on the $\gamma,\Lambda,f,g$ and $d$ such that 
\begin{equation}
\sup_{k,j}
\| 
X_{-k\tau+jh}^{-k\tau}
- \tilde{X}_{-k\tau+jh}^{-k\tau}
\|_{L^{2}(\Omega;\mathbb{R}^d)}
\leq Ch.
\end{equation}
\end{thm}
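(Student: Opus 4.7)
The plan is to derive a contractive one-step recursion
\[
(1+h\kappa)\,\E[\|e_j\|^2] \leq \E[\|e_{j-1}\|^2] + Ch^3
\]
for the error $e_j := X^{-k\tau}_{-k\tau+jh} - \tilde X^{-k\tau}_{-k\tau+jh}$, with $\kappa := \lambda_1 - c_f > 0$. Iterating with $e_0 = 0$ will yield $\sup_{k,j}\E[\|e_j\|^2] \leq C h^2/\kappa$ uniformly in both $j$ and $k$, from which the claimed order-one bound follows upon taking a square root. Writing $t_j := -k\tau+jh$, I would first subtract \eqref{eq:backward_Euler_method} from \eqref{eq:Problem_SDE} cast in implicit form at adjacent nodes to obtain
\[
(I+h\Lambda)\,e_j = e_{j-1} + h\bigl(f(t_j,X_{t_j})-f(t_j,\tilde X_{t_j})\bigr) + R_j,
\]
with local residual $R_j = R^{(1)}_j + R^{(2)}_j + R^{(3)}_j$, where $R^{(1)}_j := \int_{t_{j-1}}^{t_j}\Lambda(X_{t_j}-X_s)\,\dd s$, $R^{(2)}_j := \int_{t_{j-1}}^{t_j}(f(s,X_s)-f(t_j,X_{t_j}))\,\dd s$ and $R^{(3)}_j := \int_{t_{j-1}}^{t_j}(g(s)-g(t_{j-1}))\,\dd W_s$. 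The algebraic identity $\|e_j\|^2-\|e_{j-1}\|^2 = 2\langle e_j,e_j-e_{j-1}\rangle - \|e_j-e_{j-1}\|^2$, combined with the positive-definiteness of $\Lambda$ and the one-sided Lipschitz bound \eqref{eq:c_f}, then yields the energy-type inequality
\[
(1+2h\kappa)\|e_j\|^2 + \|e_j-e_{j-1}\|^2 \leq \|e_{j-1}\|^2 + 2\langle e_j,R_j\rangle.
\]

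The technical heart of the argument is to split $R_j = \bar R_j + M_j$ such that $\|\bar R_j\|_{L^2(\Omega;\R^d)} \leq Ch^2$, $\|M_j\|_{L^2(\Omega;\R^d)} \leq Ch^{3/2}$ and $\E[M_j\,|\,\mathcal F_{t_{j-1}}] = 0$. Since $g$ is deterministic, $R^{(3)}_j$ lies entirely in $M_j$, with the stated size following from the It\^o isometry and Assumption \ref{ass}. For $R^{(1)}_j$, substituting the SDE representation of $X_{t_j}-X_s$ and applying stochastic Fubini splits it into a Lebesgue part $\int_{t_{j-1}}^{t_j}(r-t_{j-1})\Lambda(-\Lambda X_r + f(r,X_r))\,\dd r$ bounded by $Ch^2$ via Theorem \ref{thm:The p-th moment of the SDE solution} and \eqref{eq:the_esti_f(t,x)}, plus the conditionally centred stochastic integral $\int_{t_{j-1}}^{t_j}(r-t_{j-1})\Lambda g(r)\,\dd W_r$ of size $Ch^{3/2}$. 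For $R^{(2)}_j$ I would telescope $f(s,X_s) - f(t_j,X_{t_j})$ through $(s,X_{t_{j-1}})$ and $(t_j,X_{t_{j-1}})$, use \eqref{eq:the_esti_f(t,x)-f(s,x)} to control the middle pure-time increment by $O(h)$ pointwise, and Taylor expand the remaining spatial differences about $X_{t_{j-1}}$. Because $\partial_x f(\cdot,X_{t_{j-1}})$ is $\mathcal F_{t_{j-1}}$-measurable, substituting the SDE for $X_s - X_{t_{j-1}}$ and $X_{t_j}-X_{t_{j-1}}$ lets the Wiener factor separate off as a conditionally centred stochastic integral contributing to $M_j$ at the order $Ch^{3/2}$; the remaining drift and curvature pieces are of order $Ch^2$ in $L^2$ by \eqref{eq:the_esti_f'(t,x)-f'(t,y)} together with Lemma \ref{lem:the_esti_(X(t1)-X(t2)}.

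With this decomposition, the cross term splits as $2\E\langle e_j,R_j\rangle = 2\E\langle e_j,\bar R_j\rangle + 2\E\langle e_j,M_j\rangle$. The first summand is handled by Cauchy--Schwarz and Young to give $2\E\langle e_j,\bar R_j\rangle \leq h\kappa\,\E\|e_j\|^2 + Ch^3/\kappa$. For the second, writing $e_j = e_{j-1}+(e_j-e_{j-1})$ and using $\E\langle e_{j-1},M_j\rangle=0$ from the conditional martingale property yields $2\E\langle e_j,M_j\rangle = 2\E\langle e_j-e_{j-1},M_j\rangle \leq \E\|e_j-e_{j-1}\|^2 + Ch^3$. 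The crucial point is that the $\E\|e_j-e_{j-1}\|^2$ produced here cancels exactly with the identical quantity on the left of the energy inequality, leaving the clean recursion $(1+h\kappa)\E\|e_j\|^2 \leq \E\|e_{j-1}\|^2 + Ch^3$, which a geometric-series summation closes uniformly in $k$ and $j$. The main obstacle will be the second step, namely extracting the conditionally centred part of $R^{(2)}_j$ at the sharp order $Ch^{3/2}$: without this splitting one only has $\|R^{(2)}_j\|_{L^2}\leq Ch^{3/2}$, which via a direct Cauchy--Schwarz yields an $O(h^2)$ residual on the right of the recursion and recovers merely the order-$1/2$ rate of \cite{wu2022backward}. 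The additive (hence deterministic) noise coefficient $g$ and the $C^1$-smoothness of $f$ from Assumption \ref{ass:f^{'}} are precisely what make the sharper decomposition possible, thereby lifting the convergence rate from one-half to one.
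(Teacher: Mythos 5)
Your proposal is correct and follows essentially the same route as the paper's proof: the identical residual $\mathcal{R}_j$ (the $\Lambda$-increment, $f$-increment and $g$-increment integrals), the same use of the monotonicity gap $v=\lambda_1-c_f>0$ to produce a contractive recursion, and the same two-level mechanism whereby the conditionally centred part of the residual only needs to be $O(h^{3/2})$ (entering quadratically) while the non-centred part must be $O(h^2)$, followed by the same geometric summation from $D_0=0$. The differences are purely organizational: you split $R_j=\bar R_j+M_j$ up front and absorb the martingale cross term into the retained $\|e_j-e_{j-1}\|^2$ from the energy identity, whereas the paper pairs $\mathcal{R}_j$ with the $\mathcal{F}_{(j-1)h-k\tau}$-measurable $D_{j-1}$ arising from its squared-norm identity and bounds $\|\E[\mathcal{R}_j\,|\,\mathcal{F}_{(j-1)h-k\tau}]\|_{L^2(\Omega;\mathbb{R}^d)}\leq Ch^{2}$ via the weighted Young inequality $2ab\leq vha^2+\tfrac{1}{vh}b^2$ (also expanding $f$ about $X_s$ rather than $X_{t_{j-1}}$) --- two phrasings of the same argument.
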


\begin{proof}[Proof of Theorem \ref{thm:error analysis}]
First note that
\begin{align}    
\label{eq:X_j-X_s}
\begin{split}
X_{-k\tau+jh}^{-k\tau} & =
X_{-k\tau+(j-1)h}^{-k\tau}
-\int_{-k\tau+(j-1)h}^{-k\tau+jh}\Lambda X_{s}^{-k\tau} \, \dd s\\
&\quad
+\int_{-k\tau+(j-1)h}^{-k\tau+jh}f(s,X_{s}^{-k\tau})
\, \dd s
+\int_{-k\tau+(j-1)h}^{-k\tau+jh}g(s) 
\, \dd W_{s}\\
& =
X_{-k\tau+(j-1)h}^{-k\tau}
-\Lambda hX_{-k\tau+jh}^{-k\tau}
+hf(jh,X_{-k\tau+jh}^{-k\tau})\\
&\quad+g((j-1)h)\Delta W_{-k\tau+(j-1)h}
+\mathcal{R}_{j},
\end{split}
\end{align}
where
\begin{equation}
\label{eq:R_j}
\begin{split}
\mathcal{R}_{j}
&:=-\int_{-k\tau+(j-1)h}^{-k\tau+jh}
\Lambda(X_{s}^{-k\tau}-X_{-k\tau+jh}^{-k\tau}) 
\, \dd s\\
& \quad+
\int_{-k\tau+(j-1)h}^{-k\tau+jh}
f(s,X_{s}^{-k\tau})-f(jh,X_{-k\tau+jh}^{-k\tau})
\, \dd s \\
& \quad 
+\int_{-k\tau+(j-1)h}^{-k\tau+jh}
g(s)-g((j-1)h) 
\, \dd W_{s}.\\
\end{split}
\end{equation}
Subtracting \eqref{eq:backward_Euler_method} from this yields
\begin{equation} \label{eq:error}
\begin{split}
X_{-k\tau+jh}^{-k\tau}- \tilde{X}_{-k\tau+jh}^{-k\tau}
&=
X_{-k\tau+(j-1)h}^{-k\tau}- \tilde{X}_{-k\tau+(j-1)h}^{-k\tau}-\Lambda h(X_{-k\tau+jh}^{-k\tau}- \tilde{X}_{-k\tau+jh}^{-k\tau})\\
& \quad
+h
\big(
f(jh,X_{-k\tau+jh}^{-k\tau})-f(jh,\tilde{X}_{-k\tau+jh}^{-k\tau})
\big)\\
& \quad
+g((j-1)h)\Delta W_{-k\tau+(j-1)h}-g((j-1)h)\Delta W_{-k\tau+(j-1)h}+\mathcal{R}_{j}.\\
\end{split}
\end{equation}
For brevity, we denote
\begin{equation}
\label{eq:D_j,Delta f_{j}}
\begin{split}
D_j
:=X_{-k\tau+jh}^{-k\tau}- \tilde{X}_{-k\tau+jh}^{-k\tau},
\quad
\Delta f_{j}
:=f(jh,X_{-k\tau+jh}^{-k\tau})-f(jh, \tilde{X}_{-k\tau+jh}^{-k\tau}).
\end{split}
\end{equation}
We emphasize that $D_j$ and $\Delta f_{j}$ are $\mathcal{F}_{jh-k\tau}$-measurable.\\
Using \eqref{eq:D_j,Delta f_{j}}, \eqref{eq:error} can be now rewritten as
\begin{equation}
D_j=D_{j-1}-\Lambda hD_j+h\Delta f_{j}+\mathcal{R}_{j}.
\end{equation}
{\color{black} This leads to}
\begin{equation}
\|
D_j+\Lambda hD_j-h\Delta f_{j}
\| ^{2}
=
\|
D_{j-1}+\mathcal{R}_{j}
\| ^{2}.
\end{equation}
Using \eqref{eq:Lambda} {\color{black}and} \eqref{eq:c_f} gives
\begin{equation}
\begin{split}
\|
&D_j+\Lambda hD_j-h\Delta f_{j}
\| ^{2} \\
&=\| D_j \| ^{2}+2h \langle D_{j},\Lambda D_j \rangle 
-2h \langle D_j,\Delta f_{j} \rangle+h^{2}\| \Lambda D_j -\Delta f_{j} \|^{2}\\
&\geq
\| D_j \| ^{2}+2\lambda_{1}h \| D_j \|^{2}-2c_{f}h \|D_j \|^{2}\\
&=
\big(1+2(\lambda_{1}-c_{f})h\big)\| D_j \| ^{2}.
\end{split}
\end{equation}
Meanwhile,
\begin{equation}
\|
D_{j-1}+\mathcal{R}_{j}
\| ^{2}
=\| D_{j-1}\|^{2}+2\langle D_{j-1},\mathcal{R}_{j} \rangle
+\| \mathcal{R}_{j} \| ^{2}.
\end{equation}
As a result,
\begin{equation}
\big(1+2(\lambda_{1}-c_{f})h\big)\| D_j\| ^{2}
\leq
\| D_{j-1} \| ^{2}+2\langle D_{j-1},\mathcal{R}_{j} \rangle
+\| \mathcal{R}_{j}\| ^{2}.
\end{equation} 
Denoting $v:=\lambda_{1}-c_{f}$ and taking expectation yield,
\begin{equation}
(1+2vh)\E[\| D_j\|^{2}]
\leq
\E[\| D_{j-1} \|^{2}]+
2\E[\langle D_{j-1},\mathcal{R}_{j} \rangle]
+\E[\| \mathcal{R}_{j}\|^{2}].
\end{equation}
Recalling ${\color{black}D_{j-1}}$ is {\color{black}$\mathcal{F}_{(j-1)h-k\tau}$}-measurable, we deduce
\begin{equation}
\E[\langle D_{j-1},\mathcal{R}_{j} \rangle]
=
\E 
\big[
\E [\langle D_{j-1},\mathcal{R}_{j} \rangle 
| \mathcal{F}_{(j-1)h-k\tau}] 
\big]
=
\E 
\big[{\color{black}\langle} D_{j-1},
 \E[\mathcal{R}_{j}| \mathcal{F}_{(j-1)h-k\tau}]\rangle
\big].
\end{equation}
Further, noting $v>0$ by  \eqref{eq:c_f}, and using the Cauchy-Schwartz inequality $2ab\leq vha^{2}+\frac{1}{vh}b^{2}$ {\color{black}for arbitrary positive $h$},
we obtain
\begin{equation}
\begin{split}
(1+2vh)\E[\| D_j\|^{2}]
&\leq
\E[\| D_{j-1}\|^{2}]+2\E \big[\langle \sqrt{vh} D_{j-1},\frac{1}{\sqrt{vh}}\E[\mathcal{R}_{j} | \mathcal{F}_{(j-1)h-k\tau}] \rangle \big ]
+\E[\| \mathcal{R}_{j} \| ^{2}]\\
&\leq
(1+vh)\E[\|D_{j-1}\|^{2}]+\E[\| \mathcal{R}_{j}\|^{2}]
+\dfrac{1}{vh}\E\big[\| \E[\mathcal{R}_{j} | \mathcal{F}_{{(j-1)h-k\tau}}] \|^{2}\big].
\end{split}
\end{equation}
Hence,
\begin{equation}\label{eq:D_j}
\begin{split}
\E[\|D_j \|^{2}]
&\leq
\tfrac{1+vh}{1+2vh}\E[\| D_{j-1}\|^{2}]
+\tfrac{1}{1+2vh}\E[\| \mathcal{R}_{j}\| ^{2}]+
\tfrac{1}{(1+2vh)vh}\E\big[\|\E[\mathcal{R}_{j} | \mathcal{F}_{(j-1)h-k\tau}]\|^{2}\big]\\
&=
(1-\tfrac{v}{1+2vh}h)\E[\| D_{j-1}\|^{2}]
+\tfrac{1}{1+2vh}\E[\| \mathcal{R}_{j}\| ^{2}]
+\tfrac{1}{(1+2vh)vh}\E\big[\| \E[\mathcal{R}_{j} | \mathcal{F}_{{(j-1)h-k\tau}}]\|^{2}\big].
\end{split}
\end{equation}
Therefore, we only need to estimate two error terms $\E[\| \mathcal{R}_{j}\|^{2}]$ and $\E\big[\|\E[\mathcal{R}_{j}|\mathcal{F}_{{(j-1)h-k\tau}}] \|^{2}\big]$.
Recalling the definition of $\mathcal{R}_{j}$ given by \eqref{eq:R_j}
and using an triangle inequality yield
\begin{equation}
\label{eq:|R_j|^2|}
\begin{split}
\| \mathcal{R}_{j} \|_{L^{2}(\Omega;\mathbb{R}^d)}
&\leq
\bigg \|
\int_{-k\tau+(j-1)h}^{-k\tau+jh}
\Lambda(X_{s}^{-k\tau}-X_{-k\tau+jh}^{-k\tau}){\color{black}\, \dd s}
\bigg \|
_{L^{2}(\Omega;\mathbb{R}^d)}
\\
& \quad +
\bigg\|
\int_{-k\tau+(j-1)h}^{-k\tau+jh}
f(s,X_{s}^{-k\tau})
-f(jh,X_{-k\tau+jh}^{-k\tau}) 
\, \dd s
\bigg\|
_{L^{2}(\Omega;\mathbb{R}^d)}\\
& \quad+
\bigg\|
\int_{-k\tau+(j-1)h}^{-k\tau+jh} 
g(s)-g((j-1)h) 
\, \dd W_{s} 
 \bigg\|
 _{L^{2}(\Omega;\mathbb{R}^d)}\\
&=:
\mathbb{I}_1
+\mathbb{I}_2
+\mathbb{I}_3.
\end{split}
\end{equation}
For the term
$\mathbb{I}_1$,  
following the H\"older inequality and \eqref{eq:the_esti_(X(t1)-X(t2)} shows
\begin{equation}
\begin{split}
\label{eq:I_1}
\mathbb{I}_1
& \leq
\int_{-k\tau+(j-1)h}^{-k\tau+jh}
\|
\Lambda(X_{s}^{-k\tau}-X_{-k\tau+jh}^{-k\tau}) 
\|_{L^{2}(\Omega;\mathbb{R}^d)}
\, \dd s\\
& \leq
Ch^{\frac{3}{2}}
\Big(
1+\sup_{k\in \mathbb{N}} \sup_{t\geq -k\tau}
\| X_{t}^{-k\tau} \|^\gamma_{L^{2\gamma}(\Omega;\mathbb{R}^d)}
\Big).
\end{split}
\end{equation}
With the help of 
the H\"older inequality
and \eqref{eq:the_esti_f(s,x(s))-f(t,x(t)},
one can obtain
\begin{equation}
\label{eq:I_2}
\begin{split}
\mathbb{I}_2
& \leq
\int_{-k\tau+(j-1)h}^{-k\tau+jh}
\bigg\|
f(s,X_{s}^{-k\tau})
-f(jh,X_{-k\tau+jh}^{-k\tau})
\bigg\|
_{L^{2}(\Omega;\mathbb{R}^d)} 
\, \dd s\\
& \leq
Ch^{\frac{3}{2}}
\Big(
1+\sup_{k\in \mathbb{N}} \sup_{t\geq -k\tau}
\| X_{t}^{-k\tau} \| 
^{2\gamma-1}_{L^{4\gamma-2}
(\Omega;\mathbb{R}^d)}
\Big).
\end{split}
\end{equation}
In view of the It\^o isomery and by Assumption \ref{ass} (iii)
\begin{equation}
\label{eq:I_3}
\begin{split}
\mathbb{I}_3
=
\Big( 
\int_{-k\tau+(j-1)h}^{-k\tau+jh} 
\| g(s)-g((j-1)h) \| ^{2} \, \dd s
\Big)
^{\frac{1}{2}}
\leq
Ch^{\frac{3}{2}}.
\end{split}
\end{equation}
Putting all the above estimates together we derive from \eqref{eq:|R_j|^2|} that 
\begin{equation}
\| 
\mathcal{R}_{j} 
\|
_{L^{2}(\Omega;\mathbb{R}^d)}
\leq
Ch^{\frac{3}{2}}
\Big(
1+\sup_{k\in \mathbb{N}} \sup_{t\geq -k\tau}
\| X_{t}^{-k\tau} \|
^{2\gamma-1}_{L^{4\gamma-2}(\Omega;\mathbb{R}^d)}
\Big).\\
\end{equation}
Note that
$
\E[\int_{-k\tau+(j-1)h}^{-k\tau+jh}
g(s)-g((j-1)h) 
\, \dd W_{s}  | \mathcal{F}_{(j-1)h-k\tau}]=0
$. 
Thus
\begin{equation}
\label{eq:R_j|F}
\begin{split}
& \|
\E
[\mathcal{R}_{j} | \mathcal{F}_{(j-1)h-k\tau}]
\|
_{L^{2}(\Omega ;\mathbb{R}^{d})} \\
& \leq
\Big\|
\E
\Big[
\int_{-k\tau+(j-1)h}^{-k\tau+jh}
-\Lambda 
(X_{s}^{-k\tau}-X_{-k\tau+jh}^{-k\tau}) 
\, \dd s
\Big| \mathcal{F}_{(j-1)h-k\tau}
\Big]
\Big\|
_{L^{2}(\Omega ;\mathbb{R}^{d})}\\
& \quad +
\Big\|
\E
\Big[
\int_{-k\tau+(j-1)h}^{-k\tau+jh}
f(s,X_{s}^{-k\tau})
-f(jh,X_{-k\tau+jh}^{-k\tau}) 
\, \dd s
\Big| \mathcal{F}_{(j-1)h-k\tau}
\Big]
\Big\| 
_{L^{2}(\Omega ;\mathbb{R}^{d})}\\
& =:
\mathbb{I}_4
+\mathbb{I}_5.
\end{split}
\end{equation}
In order to estimate $\mathbb{I}_4$, 
we first show that
\begin{equation}
\begin{split}
\label{eq:g(r)|F}
& \E
\Big[
\int_{-k\tau+(j-1)h}^{-k\tau+jh}
\int_{s}^{-k\tau+jh}
g(r) 
\, \dd {\color{black}W_r} \, \dd {\color{black}s}
\Big| \mathcal{F}_{(j-1)h-k\tau}
\Big] \\
& \quad =
\int_{-k\tau+(j-1)h}^{-k\tau+jh}
\E
\Big[
\int_{s}^{-k\tau+jh}
g(r) 
\, \dd {\color{black}W_r}
\Big| \mathcal{F}_{(j-1)h-k\tau}
\Big]
\, \dd {\color{black}s} \\
& \quad =
\int_{-k\tau+(j-1)h}^{-k\tau+jh}
\E
\Big[
\E
\Big[
\int_{s}^{-k\tau+jh}
g(r) 
\, \dd {\color{black}W_r}
\Big| \mathcal{F}_{s}
\Big]
\Big| \mathcal{F}_{(j-1)h-k\tau}
\Big]
\, \dd {\color{black}s}\\
& \quad =0.
\end{split}    
\end{equation}
As a result,
we derive from \eqref{eq:X_j-X_s} that 
\begin{equation}
\begin{split}
& \E
\Big[
\int_{-k\tau+(j-1)h}^{-k\tau+jh}
-\Lambda 
(X_{s}^{-k\tau}-X_{-k\tau+jh}^{-k\tau}) 
\, \dd s
\Big| \mathcal{F}_{(j-1)h-k\tau}
\Big] \\
& \quad=
\E
\Big [
\int_{-k\tau+(j-1)h}^{-k\tau+jh}
\int_{s}^{-k\tau+jh}
(
-\Lambda^2 X_{r}^{-k\tau}
+ \Lambda f(r,X_{r}^{-k\tau})
)
 \, \dd r \, \dd s 
\Big ] .\\
\end{split}
\end{equation}
Based on the Jensen inequality and 
the H\"older inequality, 
according to 
\eqref{eq:the_p_th_(X(t))} and
\eqref{eq:the_esti_f(t,x)}, one can infer
\begin{equation}
\label{eq:I4}
\begin{split}
\mathbb{I}_4
& =
\Big\|
\E
\Big [
\int_{-k\tau+(j-1)h}^{-k\tau+jh}
\int_{s}^{-k\tau+jh}
\big(
-\Lambda^2 X_{r}^{-k\tau}
+ \Lambda f(r,X_{r}^{-k\tau}) 
\big)
\, \dd r \, \dd s
\Big| \mathcal{F}_{(j-1)h-k\tau}
\Big ]
\Big\|
_{L^{2}(\Omega ;\mathbb{R}^{d})}\\
& \leq
\Big\|
\int_{-k\tau+(j-1)h}^{-k\tau+jh}
\int_{s}^{-k\tau+jh}
\big(
-\Lambda^2 X_{r}^{-k\tau}
+ \Lambda f(r,X_{r}^{-k\tau}) 
\big)
\, \dd r \, \dd s
\Big\|
_{L^{2}(\Omega ;\mathbb{R}^{d})}\\
& \leq
\int_{-k\tau+(j-1)h}^{-k\tau+jh}
\int_{s}^{-k\tau+jh}
\|
-\Lambda^2 X_{r}^{-k\tau}
+ 
\Lambda f(r,X_{r}^{-k\tau}) 
\|
_{L^{2}(\Omega ;\mathbb{R}^{d})}
\, \dd r \, \dd s \\
& \leq
\int_{-k\tau+(j-1)h}^{-k\tau+jh}
\int_{s}^{-k\tau+jh}
\|
\Lambda^2
X_{r}^{-k\tau}
\|
_{L^{2}(\Omega ;\mathbb{R}^{d})}
\, \dd r \, \dd s \\
& \quad +
\int_{-k\tau+(j-1)h}^{-k\tau+jh}
\int_{s}^{-k\tau+jh}
\|
\Lambda
f(r,X_{r}^{-k\tau}) 
\|
_{L^{2}(\Omega ;\mathbb{R}^{d})}
\, \dd r \, \dd s \\
& \leq
Ch^{2}
\Big(
1+
\big(\| X_{t}^{-k\tau} \|^{\gamma}_{L^{2\gamma}(\Omega;\mathbb{R}^d)}
\Big).
\end{split}
\end{equation}
With regard to $\mathbb{I}_5$, 
{\color{black} we first 
rewrite it as follows} 
\begin{equation}
\begin{split}
\mathbb{I}_5
& =
\Big\|
\E
\Big[
\int_{-k\tau+(j-1)h}^{-k\tau+jh}
f(s,X_{s}^{-k\tau})
-f(j,X_{-k\tau+jh}^{-k\tau}) 
\, \dd s
\Big| \mathcal{F}_{(j-1)h-k\tau}
\Big]
\Big\| 
_{L^{2}(\Omega ;\mathbb{R}^{d})}\\
& =
\Big\|
\E
\Big[
\int_{-k\tau+(j-1)h}^{-k\tau+jh}
f(s,X_{s}^{-k\tau})
-f(s,X_{-k\tau+jh}^{-k\tau})
\, \dd s 
\Big| \mathcal{F}_{(j-1)h-k\tau}
\Big]
\\
& \qquad +
\E
\Big[
\int_{-k\tau+(j-1)h}^{-k\tau+jh}
f(s,X_{-k\tau+jh}^{-k\tau})
-f(jh,X_{-k\tau+jh}^{-k\tau})
\, \dd s 
\Big| \mathcal{F}_{(j-1)h-k\tau}
\Big]
\Big\|
_{L^{2}(\Omega ;\mathbb{R}^{d})}.
\\
\end{split}
\end{equation}
{\color{black}Because of the existence of the first derivative of $f$ with respect to the spatial variable,}
for $s\in [-k\tau+(j-1)h,-k\tau+jh]$,
\begin{equation}\label{eq:f(t,y)-f(t,x)}
\begin{split}
& f(s,X_{-k\tau+jh}^{-k\tau})
 -f(s,X_{s}^{-k\tau})\\
& \quad =
\int_{0}^{1}
\tfrac{\partial f}{\partial x}
\big(s,
X_{s}^{-k\tau}
+
\zeta 
(X_{-k\tau+jh}^{-k\tau}-X_{s}^{-k\tau})
\big)
(X_{-k\tau+jh}^{-k\tau}
-X_{s}^{-k\tau})
\, \dd \zeta \\
& \quad =
\int_{0}^{1}
\tfrac{\partial f}{\partial x}
\big(s,
X_{s}^{-k\tau}
+
\zeta 
(X_{-k\tau+jh}^{-k\tau}-X_{s}^{-k\tau})
\big)
\int_{s}^{-k\tau+jh}
\big(
-\Lambda X_{r}^{-k\tau}
+f(r,X_{r}^{-k\tau}) 
\big)
\, \dd r \, \dd \zeta \\
& \qquad +
\int_{0}^{1}
\tfrac{\partial f}{\partial x}
\big(s,
X_{s}^{-k\tau}
+
\zeta (X_{-k\tau+jh}^{-k\tau}-X_{s}^{-k\tau})
\big)
\int_{s}^{-k\tau+jh}
g(r) 
\,\dd W_{r} \, \dd \zeta \\
& \quad =
\begin{matrix}
\underbrace
{
\int_{0}^{1}
\tfrac{\partial f}{\partial x}
\big(
s, X_{s}^{-k\tau}
+
\zeta 
(X_{-k\tau+jh}^{-k\tau}-X_{s}^{-k\tau})
\big)
\int_{s}^{-k\tau+jh}
\big(
-\Lambda X_{r}^{-k\tau}
+f(r,X_{r}^{-k\tau}) 
\big)
\, \dd r \, \dd \zeta  
}_{=: \Phi_{1}}
\end{matrix}
\\
& \qquad +
\int_{0}^{1}
\tfrac{\partial f}{\partial x}
\big(s,X_{s}^{-k\tau}
\big)
\int_{s}^{-k\tau+jh}
g(r)
\, \dd W_{r} \, \dd \zeta \\
&  \qquad +
\begin{matrix}
\underbrace
{
\int_{0}^{1}
\Big(
\tfrac{\partial f}{\partial x}
\big(
s,X_{s}^{-k\tau}
+ \zeta  (X_{-k\tau+jh}^{-k\tau}-X_{s}^{-k\tau})
\big)
- \tfrac{\partial f}{\partial x}
\big(
s,X_{s}^{-k\tau}
\big)
\Big)
\int_{s}^{-k\tau+jh}
g(r)
\, \dd W_{r} \, \dd \zeta 
}_{= : \Phi_{2}}. \\
\end{matrix}
\end{split}
\end{equation}
{\color{black}Following a similar argument} as \eqref{eq:g(r)|F}, we have that
\begin{equation}
\E
\Big[
\int_{-k\tau+(j-1)h}^{-k\tau+jh}
\int_{0}^{1}
\tfrac{\partial f}{\partial x}
\big(s,
X_{s}^{-k\tau}
\big)
\int_{s}^{-k\tau+jh}
g(r)
\,\dd W_{r} \,\dd \zeta 
\, \dd s
\Big| \mathcal{F}_{(j-1)h-k\tau}
\Big ]
=0.
\end{equation}
Using the Jensen inequality 
and the H\"older inequality {\color{black}gives}
\begin{equation}
\label{eq:I_5}
\begin{split}
\mathbb{I}_5
& \leq
\Big\|
\E
\Big[
\int_{-k\tau+(j-1)h}^{-k\tau+jh}
\Phi_1
\Big| \mathcal{F}_{(j-1)h-k\tau}
\Big]
\Big\|
_{L^{2}(\Omega ;\mathbb{R}^{d})}
 +
\Big\|
\E
\Big[
\int_{-k\tau+(j-1)h}^{-k\tau+jh}
\Phi_2
\Big| \mathcal{F}_{(j-1)h-k\tau}
\Big]
\Big\|
_{L^{2}(\Omega ;\mathbb{R}^{d})}
\\
& \quad +
\Big\|
\E
\Big[
\int_{-k\tau+(j-1)h}^{-k\tau+jh}
f(s,X_{-k\tau+jh}^{-k\tau})
-f(jh,X_{-k\tau+jh}^{-k\tau})
\, \dd s 
\Big| \mathcal{F}_{(j-1)h-k\tau}
\Big]
\Big\|
_{L^{2}(\Omega ;\mathbb{R}^{d})}
\\
& \leq
\Big\|
\int_{-k\tau+(j-1)h}^{-k\tau+jh}
\Phi_1
\, \dd s 
\Big\|
_{L^{2}(\Omega ;\mathbb{R}^{d})}
+
\Big\|
\int_{-k\tau+(j-1)h}^{-k\tau+jh}
\Phi_2
\, \dd s
\Big\|
_{L^{2}(\Omega ;\mathbb{R}^{d})} \\
& \quad +
\Big\|
\int_{-k\tau+(j-1)h}^{-k\tau+jh}
f(s,X_{-k\tau+jh}^{-k\tau})
-f(jh,X_{-k\tau+jh}^{-k\tau})
\, \dd s
\Big\|_{L^{2}(\Omega ;\mathbb{R}^{d})}\\
& \leq
\int_{-k\tau+(j-1)h}^{-k\tau+jh}
\|\Phi_1\|
_{L^{2}(\Omega ;\mathbb{R}^{d})}
\, \dd s 
+
\int_{-k\tau+(j-1)h}^{-k\tau+jh}
\| \Phi_2 \|
_{L^{2}(\Omega ;\mathbb{R}^{d})}
\, \dd s \\
& \quad +
\int_{-k\tau+(j-1)h}^{-k\tau+jh}
\|
f(s,X_{-k\tau+jh}^{-k\tau})
-f(jh,X_{-k\tau+jh}^{-k\tau})
\|
_{L^{2}(\Omega ;\mathbb{R}^{d})}
\, \dd s.
\end{split}
\end{equation}
In the following, we cope with the last three terms separately.
According to  the H\"older inequality and
\eqref{eq:the_p_th_(X(t))},
\eqref{eq:the_esti_f'(t,x)} {\color{black} and}
\eqref{eq:the_esti_f(t,x)}, 
we can get
\begin{equation}
\label{eq:Phi_1}
    \begin{split}
        \|
        \Phi_1
        \|
        _{L^{2}(\Omega ;\mathbb{R}^{d})} 
        & \leq
        \int_0^1
        \int_{s}^{-k\tau+jh}
        \big\|
        \tfrac{\partial f}{\partial x}
        \big(s,X_{s}^{-k\tau}
        +\zeta (
        X_{-k\tau+jh}^{-k\tau}-X_{s}^{-k\tau}) {\color{black}\big)}\\
        & \qquad 
        {\color{black}\times}
        \big(
        -\Lambda X_{r}^{-k\tau}
        +f(r,X_{r}^{-k\tau}) 
        \big)
        \big\|
        _{L^{2}(\Omega ;\mathbb{R}^{d})}
        \, \dd r \, \dd \zeta \\
        & \leq
        \int_0^1
        \int_{s}^{-k\tau+jh}
        \big\|
        C
        \big(
        1+ 
        \|
        \zeta X_{-k\tau+jh}^{-k\tau}
        +(\zeta -1) X_{s}^{-k\tau}
        \|
        \
        \big)
        ^{\gamma-1}
        \\
        & \qquad        
        {\color{black}\times}
        \big(
        1+
        \|
        X_{r}^{-k\tau}
        \|
        \big)^{\gamma}
        \big\|
        _{L^{2}(\Omega ;\mathbb{R})}
        \, \dd r \, \dd \zeta \\
        &\leq
        Ch 
        \Big(
        1+\sup_{k\in \mathbb{N}} \sup_{t\geq -k\tau}
        \| X_{t}^{-k\tau} \| 
        ^{2\gamma-1}_{L^{4\gamma-2}(\Omega;\mathbb{R}^d)}
        \Big).
    \end{split}
\end{equation}
 For the second term 
 $\|\Phi_2\|_{L^{2}(\Omega ;\mathbb{R}^{d})}$,
%
{\color{black}we can get the following estimate through} the H\"older inequality, Burkholder-Davis-Gundy inequality,
\eqref{eq:the_esti_f'(t,x)-f'(t,y)} and
\eqref{eq:the_esti_(X(t1)-X(t2)},
\begin{equation}
\label{eq:phi}
\begin{split}
\|
\Phi_2
\|
_{L^{2}(\Omega ;\mathbb{R}^{d})}
& \leq
\int_0^1
\Big\|
{\color{black}\Big(}
\tfrac{\partial f}{\partial x}
\big(
s,X_{s}^{-k\tau}
+ \zeta  (X_{-k\tau+jh}^{-k\tau}-X_{s}^{-k\tau})
\big)
- \tfrac{\partial f}{\partial x}
\big(
s,X_{s}^{-k\tau}
\big)
{\color{black}\Big)}\\
& \qquad
{\color{black}\times}\int_{s}^{-k\tau+jh}
g(r)
\, \dd W_{r} 
\Big \|
_{L^{2}(\Omega ;\mathbb{R}^{d})} 
\, \dd \zeta
\\
& \leq
{\color{black}C}\int_{0}^{1}
\Big\|
\big(1+
 \| 
 \zeta X_{-k\tau+jh}
 ^{-k\tau}+(\zeta -1) X_{s}^{-k\tau}
 \|
 +\| X_{s}^{-k\tau} \|
\big)^{\gamma -2} \\
& \qquad 
{\color{black}\times}\| 
\zeta (X_{-k\tau+jh} 
^{-k\tau}-X_{s}^{-k\tau}) 
\| 
\Big\|
_{L^{4}(\Omega ;\mathbb{R})}
\Big\|
\int_{s}^{-k\tau+jh}
g(r) \dd W_r 
\Big \|
_{L^{4}(\Omega ;\mathbb{R}^{d})} 
\, \dd \zeta \\
& \leq
Ch^{\tfrac{1}{2}}
\int_{0}^{1}
\Big\|
\big(1+
 \| 
 \zeta X_{-k\tau+jh}
 ^{-k\tau}+(\zeta -1) X_{s}^{-k\tau}
 \|
 +\| X_{s}^{-k\tau} \|
\big)^{\gamma -2} \\
& \qquad
\times
\| 
\zeta (X_{-k\tau+jh} 
^{-k\tau}-X_{s}^{-k\tau}) 
\| 
\Big\|
_{L^{4}(\Omega ;\mathbb{R})}
\, \dd \zeta \\
& \leq
Ch
\Big(
1+\sup_{k\in \mathbb{N}} \sup_{t\geq -k\tau} 
{\color{black}
\big(
\| 
X_{t}^{-k\tau}
\|
^{\gamma}
_{L^{4\gamma}(\Omega ;\mathbb{R}^{d})}
\vee
\| 
X_{t}^{-k\tau}
\|
^{2\gamma-2}
_{L^{8\gamma-8}(\Omega ;\mathbb{R}^{d})}
\big)
}
\Big).
\end{split}
\end{equation}
Using \eqref{eq:the_esti_f(t,x)-f(s,x)}, we can easily get
\begin{equation}
\label{eq:f(t,x)-f(s,y)}
\begin{split}
&
\|
f(s,X_{-k\tau+jh}^{-k\tau})
-f(jh,X_{-k\tau+jh}^{-k\tau})
\|
_{L^{2}(\Omega ;\mathbb{R}^{d})} 
\\
& \leq
Ch
\Big(
1+\sup_{k\in \mathbb{N}} \sup_{t\geq -k\tau} 
\| 
X_{t}^{-k\tau}
\|^{\gamma}_{L^{2\gamma}(\Omega ;\mathbb{R}^{d})} 
\Big).
\end{split}
\end{equation}
Inserting \eqref{eq:Phi_1}-\eqref{eq:f(t,x)-f(s,y)} into
\eqref{eq:I_5} implies 
\begin{equation}
\mathbb{I}_5
\leq
C h^2
\Big(
{\color{black}
\big(
\| 
X_{t}^{-k\tau}
\|
^{\gamma}
_{L^{4\gamma}(\Omega ;\mathbb{R}^{d})}
\vee
\| 
X_{t}^{-k\tau}
\|
^{2\gamma-2}
_{L^{8\gamma-8}(\Omega ;\mathbb{R}^{d})}
\big)
}
\Big). \\
\end{equation}
Therefore, from \eqref{eq:R_j|F} it immediately follows that
\begin{equation}
\|
\E
[\mathcal{R}_{j} | \mathcal{F}_{(j-1)h-k\tau}]
\|
_{L^{2}(\Omega ;\mathbb{R}^{d})}
\leq 
C h^{2}
\Big(
{\color{black}
\big(
\| 
X_{t}^{-k\tau}
\|
^{\gamma}
_{L^{4\gamma}(\Omega ;\mathbb{R}^{d})}
\vee
\| 
X_{t}^{-k\tau}
\|
^{2\gamma-2}
_{L^{8\gamma-8}(\Omega ;\mathbb{R}^{d})}
\big)
}
\Big) .
\end{equation}
Considering \eqref{eq:D_j}, define $\hat{v}:=\frac{v}{1+2vh}$, we  get
\begin{equation}
\begin{split}
\E[\| D_j\|^{2}]	
& \leq
(1-\hat{v}h)\E[\| D_{j-1}\|^{2}]+\tfrac{1}{1+2vh}Ch^{3}\\
& \leq
(1-\hat{v}h)^{j}\E[\| D_{0}\|^{2}]
+
\sum_{i=0}^{j-1}
(1-\hat{v}h)^{i}
Ch^{3}\\
&=
(1-\hat{v}h)^{j}
\E[\| D_{0} \|^{2}]
+\frac{1-(1-\hat{v}h)^{j}}{\hat{v}h}Ch^{3}
\end{split}
\end{equation}
By observing $D_{0}=0$,
\begin{equation}
\E[\| D_j\|^{2}]	
\leq
Ch^{2}.
\end{equation}
Then the assertion follows.
\end{proof}

\begin{cor}\label{cor:Ch}
Under Assumptions \ref{ass} ,
let $X^{*}_{t}$ be the random periodic solution of SDE \eqref{eq:Problem_SDE} and $\tilde{X}^{*}_{t}$ be the random periodic solution of the backward Euler numerical approximation.
Then there exists a constant $C$ that depends on $q,\Lambda,f,g$ and $d$ such that 
\begin{equation}
\sup_{t \in \mathcal{T}^{h}} 
\E([\| X^{*}_{t}-\tilde{X}^{*}_{t}\|^{2}])^{1/2}
\leq Ch,
\end{equation}
and $\tilde{X}^{*}_{t}$ satisfies the random periodicity property.
\end{cor}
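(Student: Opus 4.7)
The plan is to derive the bound by a triangle inequality that splits the error into three pieces at a finite pull-back time $-k\tau$, control the middle piece uniformly via Theorem \ref{thm:error analysis}, and send the outer two pieces to zero as $k\to\infty$ using Theorem \ref{thm:unique_random_periodic_solution} and Theorem \ref{thm:BEM random period solution}. Note that we will implicitly invoke Assumption \ref{ass:f^{'}} as well, since Theorem \ref{thm:error analysis} depends on it; the statement of the corollary presumably inherits this.

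More concretely, fix $t\in \mathcal{T}^h$ so that $t=jh$ for some $j\in\mathbb{Z}$, and choose $k\in\mathbb{N}$ large enough that $-k\tau < t$. Writing
\begin{equation*}
X^{*}_{t}-\tilde{X}^{*}_{t}
= \bigl(X^{*}_{t}-X^{-k\tau}_{t}(\xi)\bigr)
+ \bigl(X^{-k\tau}_{t}(\xi) - \tilde{X}^{-k\tau}_{t}(\xi)\bigr)
+ \bigl(\tilde{X}^{-k\tau}_{t}(\xi) - \tilde{X}^{*}_{t}\bigr),
\end{equation*}
the $L^{2}$ triangle inequality yields three summands. The middle summand is bounded by $Ch$ uniformly in $k$ and $j$ by Theorem \ref{thm:error analysis}. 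The first and third summands vanish as $k\to\infty$ by Theorem \ref{thm:unique_random_periodic_solution} and Theorem \ref{thm:BEM random period solution} respectively. Since $Ch$ does not depend on $k$, passing to the limit $k\to\infty$ gives $\|X^{*}_{t}-\tilde{X}^{*}_{t}\|_{L^{2}(\Omega;\mathbb{R}^{d})}\leq Ch$, and taking the supremum over $t\in\mathcal{T}^{h}$ is immediate because the constant $C$ from Theorem \ref{thm:error analysis} is itself uniform in the grid index.

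For the random periodicity claim on $\tilde{X}^{*}_{t}$, I would assume (as is standard in this setting) that the stepsize $h$ divides the period $\tau$, so that shifts by $\tau$ preserve the grid $\mathcal{T}^{h}$. The periodicity of $f$ and $g$ together with the cocycle property of the Wiener shift $\theta_{\tau}$ imply that the finite-time approximation satisfies $\tilde{X}^{-(k+1)\tau}_{t+\tau}(\xi)(\omega)=\tilde{X}^{-k\tau}_{t}(\xi)(\theta_{\tau}\omega)$; passing to the pull-back limit $k\to\infty$ on both sides transfers this identity to $\tilde{X}^{*}_{t+\tau}(\omega)=\tilde{X}^{*}_{t}(\theta_{\tau}\omega)$, which is the required random periodicity.

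The only mild obstacle I foresee is the bookkeeping for the periodicity argument: one must check that the pull-back limit is invariant in $L^{2}$ under the time shift by $\tau$, which requires the uniform $L^{2}$-boundedness from Proposition \ref{prop:second moment of NS} together with the contraction-type inequality already exploited in the proof of Theorem \ref{thm:BEM random period solution}. The convergence-rate part is essentially a direct corollary once the pull-back identification is in place.
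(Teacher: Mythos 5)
Your proposal is correct and follows essentially the same route as the paper: a triangle-inequality decomposition at the pull-back time $-k\tau$, with the middle term bounded uniformly by $Ch$ via Theorem \ref{thm:error analysis} and the outer terms vanishing as $k\to\infty$ by Theorems \ref{thm:unique_random_periodic_solution} and \ref{thm:BEM random period solution}. Your observation that Assumption \ref{ass:f^{'}} must implicitly be in force (since Theorem \ref{thm:error analysis} requires it, while the corollary as stated only lists Assumption \ref{ass}) is a fair catch of an imprecision in the paper's statement rather than a deviation in method.
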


\begin{proof}[Proof of Corollary \ref{cor:Ch}]
Due to
\begin{equation}
\| X^{*}_{t}-\tilde{X}^{*}_{t}\|^{2}
\leq
\limsup_{k}
\big[\| X^{*}_{t}-X^{-k\tau}_{t}\|^{2}
+\| X^{-k\tau}_{t}-\tilde{X}^{-k\tau}_{t}\|^{2}
+\|\tilde{X}^{-k\tau}_{t}-\tilde{X}^{*}_{t}\|^{2}\big],
\end{equation}
thus the conclusion can be obtained by Theorem \ref{thm:unique_random_periodic_solution}, Theorem \ref{thm:BEM random period solution} and Theorem \ref{thm:error analysis}.

\end{proof}
{\color{black} Corollary \ref{cor:Ch} implies that the order-one convergence can be achieved.}
\section{Numerical experiments}
\label{sec:numerical results}
{\color{black} In this Section, we present two} numerical experiments to {\color{black}support} the theoretical findings. {\color{black} To evaluate the effectiveness of the proposed numerical method under different scenarios, we vary the drift coefficients while maintaining a constant diffusion coefficient, ie, $g(t)=1$. }
\subsection{Example 1}\label{sec:eg1}
{\color{black}In the first example, we test the performance of the backward Euler method \eqref{eq:backward_Euler_method} on a linear drift coefficient as follows:}
\begin{equation}\label{eq:example1}
\dd X^{t_{0}}_{t}=-\pi X^{t_{0}}_{t}\,\dd t+\sin(2\pi t)
\, \dd t+\dd W_{t}.
\end{equation}
{\color{black}To fit into the general form of SDE in \eqref{eq:Problem_SDE}, we have that in this case}
\begin{equation}
\begin{split}
\Lambda
=\pi,\text{ and }
f(t,X^{t_{0}}_{t})
=\sin(2\pi t).
\end{split}
\end{equation}
It is easy to verify that the associated period is 1 and Assumptions \ref{ass} are fulfilled with $\lambda_{1}=\pi$, $c_{f}=1$ and $c_{g}=1$. 
According to Theorem \ref{thm:unique_random_periodic_solution},  {\color{black}SDE \eqref{eq:example1} admits a unique} random periodic solution.
By Theorem \ref{thm:BEM random period solution}, its backward Euler simulation also has a random periodic path.

{\color{black} We will first verify the periodicity by examining the dynamics of two processes under the same realisation $\omega$:  $\tilde{X}^{-5}_{t}(\omega,0.3)$ over
 $-5\leq t\leq 13$ and $\tilde{X}^{-5}_{t}(\theta_{-1}\omega,0.3)$ over $-5\leq t\leq 14$, where $0.3$ is the initial condition of both processes. Because of Theorem \ref{thm:BEM random period solution}, it is expected that $\tilde{X}^{-5}_{t}(\omega,0.3)\approx \tilde{X}^{*}_{t}(\omega)$ and $\tilde{X}^{-5}_{t}(\theta_{-1}\omega,0.3)\approx\tilde{X}^{*}_{t}(\theta_{-1}\omega)$ after a sufficiently long time, and we may then observe that $\tilde{X}^{-5}_{t-1}(\omega,0.3)\approx\tilde{X}^{-5}_{t}(\theta_{-1}\omega,0.3)$ due to the fact $\tilde{X}^{*}_{t-1}(\omega)=\tilde{X}^{*}_{t}(\theta_{-1}\omega)$ in Definition \ref{def:rps}. 
 
{\color{black}
 Figure \ref{fig1} illustrates the evolution of processes before $t=0$, where two trajectories separate from the initial time point and resemble each other with an increasing time gap. For example, it can be observed that $\tilde{X}^{-5}_{-4.35}(\omega,0.3)\approx\tilde{X}^{-5}_{-4.15}(\theta_{-1}\omega,0.3)$, $\tilde{X}^{-5}_{-2.5}(\omega,0.3)\approx\tilde{X}^{-5}_{-1.9}(\theta_{-1}\omega,0.3)$ and $\tilde{X}^{-5}_{-1}(\omega,0.3)\approx\tilde{X}^{-5}_{0}(\theta_{-1}\omega,0.3)$.  Figure \ref{fig2} demonstrates both processes resemble each other with a stable time gap $1$, that is, $\tilde{X}^{-5}_{t-1}(\omega,0.3)\approx\tilde{X}^{-5}_{t}(\theta_{-1}\omega,0.3)$ over $11\leq t\leq 14$.
 }
\begin{figure}
	\centering
	\includegraphics[width=1\linewidth, height=0.4\textheight]
   {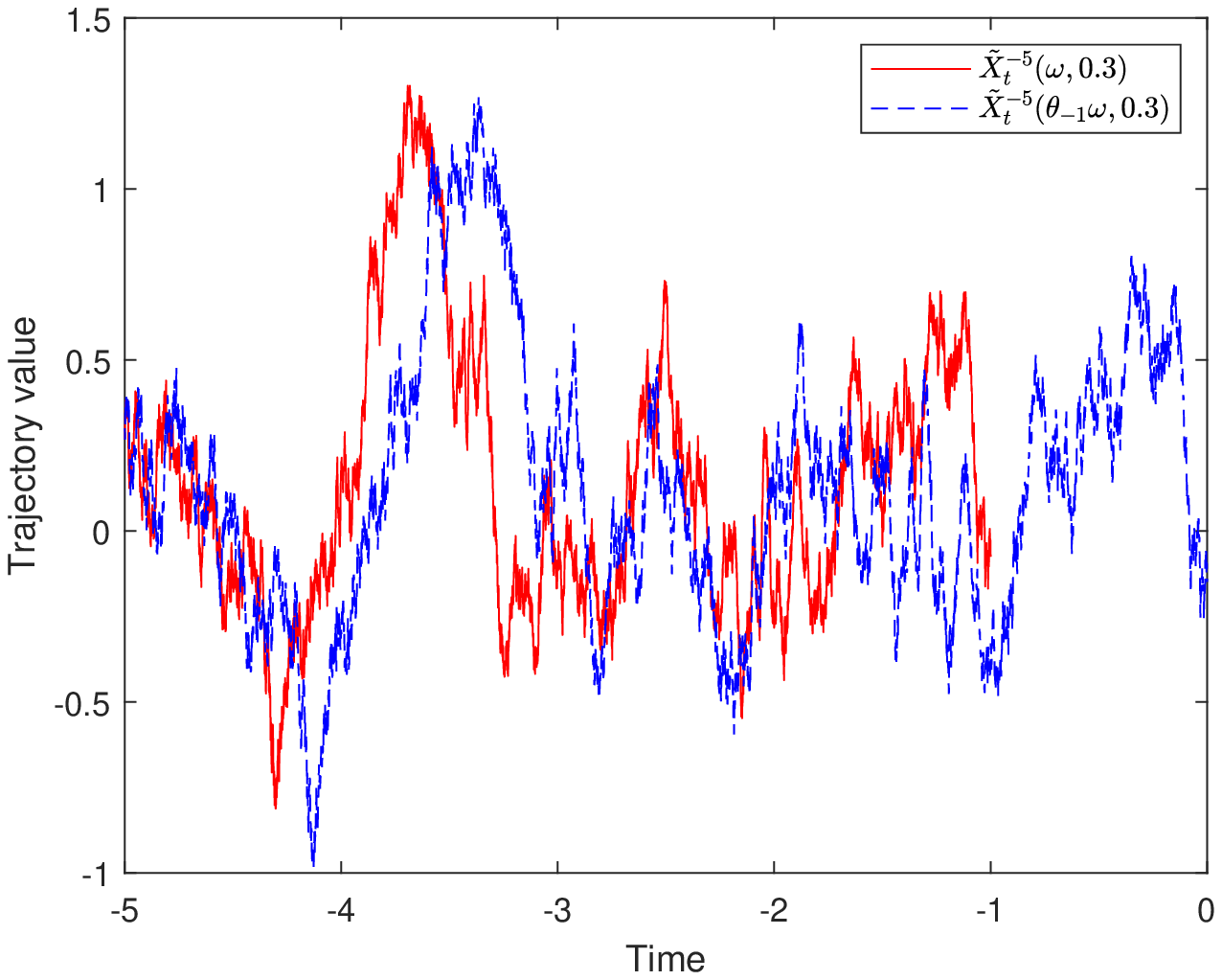}

	\caption[Figure 1.]
	{Simulations of the processes \{$\tilde{X}^{-5}_{t}(\omega,0.3),-5\leq t\leq -1$\} and \{$\tilde{X}^{-5}_{t}(\theta_{-1}\omega,0.3),-5\leq t\leq 0$\}.
 }\label{fig1}
\end{figure}

\begin{figure}
	\centering
	\includegraphics[width=1\linewidth, height=0.4\textheight]{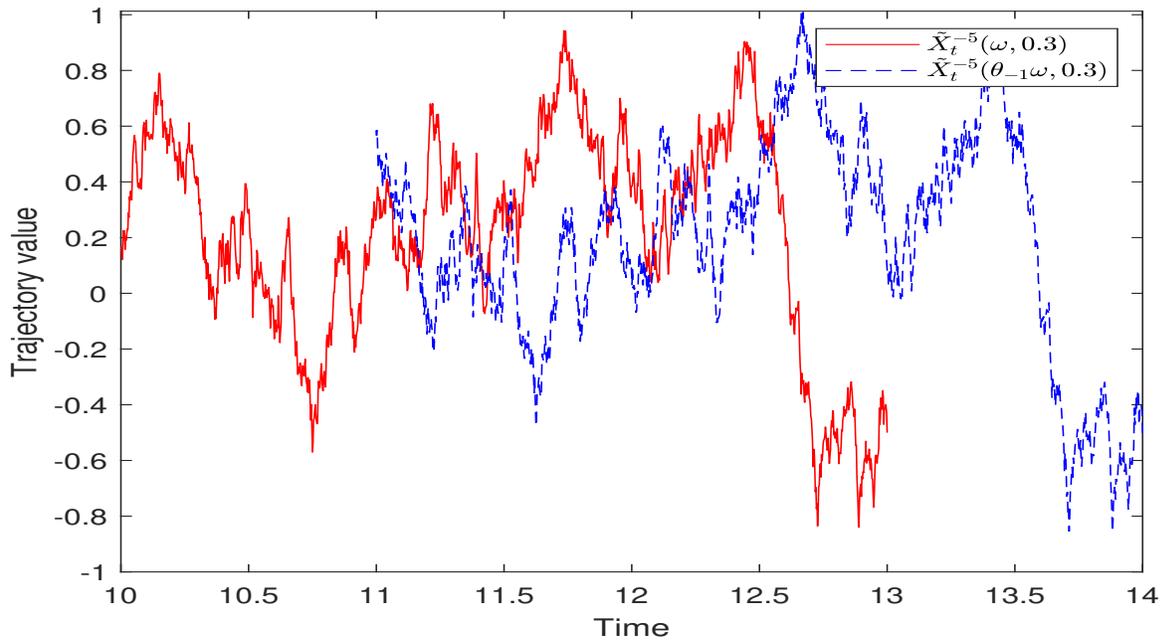}

	\caption[Figure 2.]
	{Simulations of the processes \{$\tilde{X}^{-5}_{t}(\omega,0.3),10\leq t\leq 13$\} and \{$\tilde{X}^{-5}_{t}(\theta_{-1}\omega,0.3),11\leq t\leq 14$\}.
	}\label{fig2}
\end{figure}

{\color{black}
Next, to test the strong convergence rate of backward Euler method, we simulate the solution of \eqref{eq:example1} over $t\in [-5,15]$ with $5000$ samples. The reference solution is obtained via the same numerical method at a fine stepsize $h_{exact}=2^{-15} \times 20$.
We plot in Figure 3 mean-square approximation errors 
$D_{h}$ against six different stepsizes $h=2^{-i} \times 20$, $i=7,8,...,12$ on a log-log scale.}

From Figure \ref{fig3}, one can clearly observe that {\color{black}the mean-square error decreases at a slope close to 1,} consistent with {Theorem \ref{thm:error analysis}}.
{\color{black}
Suppose that the approximation error $D_{h}$ obeys a 
power law relation $D_{h}=Ch^{\kappa}$ for 
$C, \kappa >0$, so that $\log D_{h}=\log C+\kappa \log h$.
Then we do a least squares power law fit for 
$\kappa $ and get the value
0.9836 for the rate $\kappa $  with residual of 0.0697.
Again, this confirms the expected convergence rate in {Theorem \ref{thm:error analysis}.} 
}
\begin{figure}
	\centering	\includegraphics[width=0.8\linewidth, height=0.4\textheight]
 {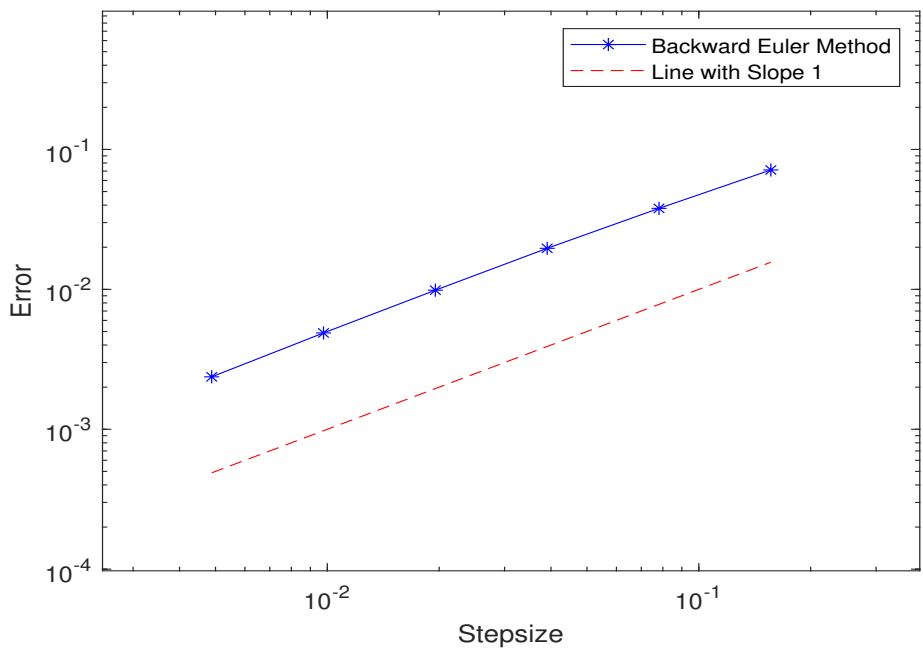}
	\caption[Figure 3.]
	{{\color{black} The mean-square error plot of the backward Euler method \eqref{eq:backward_Euler_method} for simulating the solution of \eqref{eq:example1}.}
	}\label{fig3}
\end{figure}
\subsection{Example 2}
{\color{black}In the second example, we test the performance of the backward Euler method \eqref{eq:backward_Euler_method} on a cubic drift coefficient as follows:}

\begin{equation}
\label{eq:example2}
\dd X^{t_{0}}_{t}=-2\pi X^{t_{0}}_{t}\, \dd t+(X^{t_{0}}_{t}-(X^{t_{0}}_{t})^{3}+\cos(\pi t))
\,\dd t+\dd W_{t}.
\end{equation}
{\color{black}Similarly, we run a similar experiment to verify the periodicity as described in Section \ref{sec:eg1} and present  the identical patterns of $\tilde{X}^{-6}_{t-2}(\omega,0.5)$ over
 $10\leq t\leq 14$ and $\tilde{X}^{-6}_{t}(\theta_{-2}\omega,0.5)$ over $12\leq t\leq 16$ under the same realisation $\omega$ in Figure \ref{fig4}.}

\begin{figure}
	\centering
	\includegraphics[width=1\linewidth, height=0.4\textheight]
    {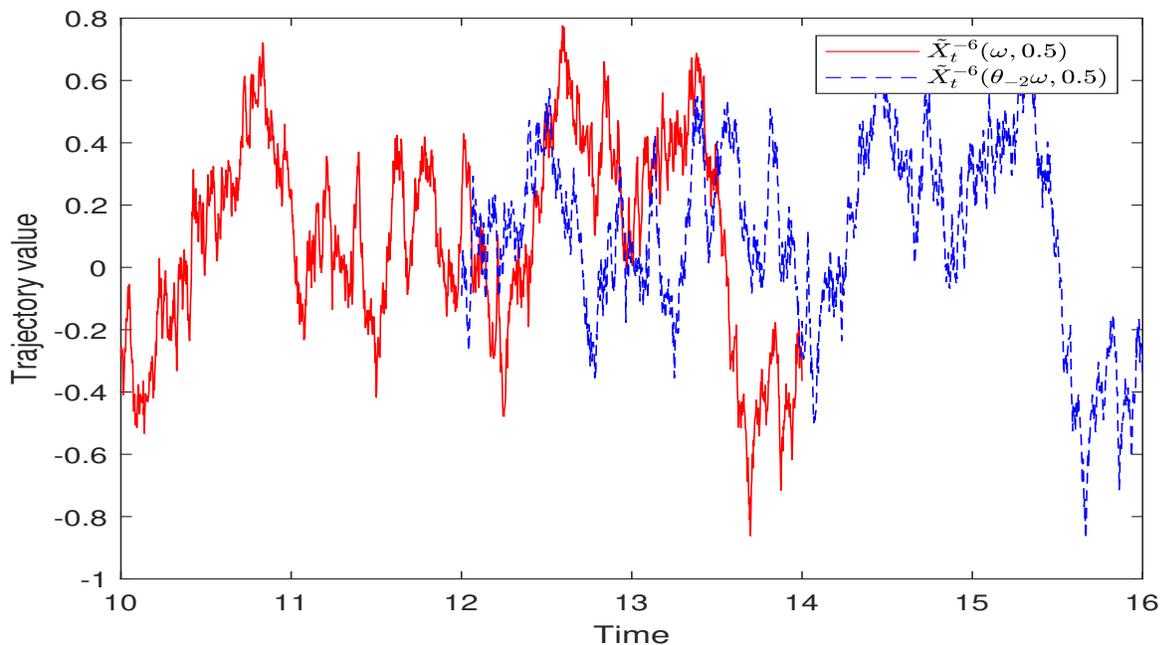}
	
	\caption[Figure 4.]
	{Simulations of the processes \{$\tilde{X}^{-6}_{t}(\omega,0.5),10\leq t\leq 14$\} and \{$\tilde{X}^{-6}_{t}(\theta_{-2}\omega,0.5),12\leq t\leq 16$\}.
	}\label{fig4}
\end{figure}

{\color{black} We also test the performance of the backward Euler method, in terms of mean-square error, in simulating SDE \eqref{eq:example2} over $[-6, 14]$. Figure \ref{figlast} confirms an order-one convergence, again consistent with Theorem \ref{thm:error analysis}.}
{\color{black} A least squares fit
produces a rate 0.9495 with residual of 0.1153 for \eqref{eq:example2}. Hence, numerical result is consistent with strong order of convergence equal to one,
as already revealed in \ref{thm:error analysis}.}
\begin{figure}
	\centering	\includegraphics[width=0.8\linewidth, height=0.4\textheight]
   {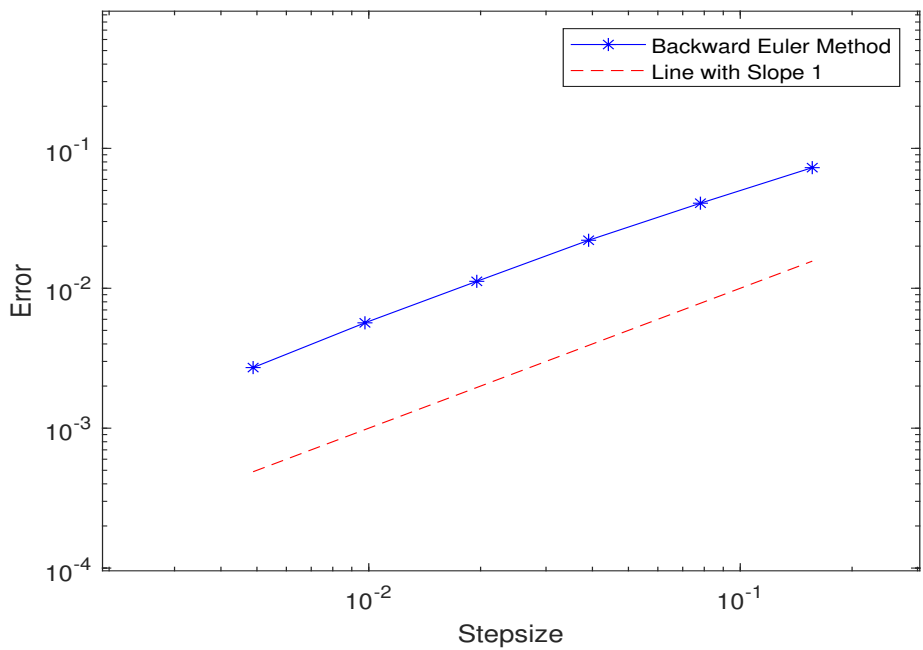}
	\caption[Figure 5.]
	 {
      {\color{black} The mean-square error plot of the backward Euler method \eqref{eq:backward_Euler_method} for simulating the solution of \eqref{eq:example2}.}
	}\label{figlast}
\end{figure}

\vskip6mm

\end{document}